\newcommand{\vr}[1]{{{#1}}}
\newcommand{\mna}[1]{{\mathcal{#1}}}
\newcommand{\ovr}[1]{\mbox{$\overline{\vr{#1}}$}} 
\newcommand{\uvr}[1]{\mbox{$\underline{\vr{#1}}$}}
\newcommand{\R}[0]{{\mathbb{R}}}
\newcommand{\Z}[0]{{\mathbb{Z}}}
\def\eps{{\varepsilon}}
\newcommand{\mmid}[0]{;\,}		%pouzivejte v definici mnozin!
\newcommand{\seznam}[1]{{\{1, \ldots, {#1}\}}}
\def\clqq{``}
\def\crqq{''}
\def\quo#1{\clqq{}#1\crqq{}}  % snadny zapis ang. uvozovek
\newcommand{\stl}[0]{{\mbox{subject to}\ \ }}
\newcommand{\st}[0]{{\ \ \mbox{subject to}\ \ }}
\DeclareMathOperator{\sgn}{sgn}		%sign
\DeclareMathOperator{\diag}{diag}	%diagonal matrix
\DeclareMathOperator{\conv}{conv} %convex hull
\DeclarePairedDelimiter\parentheses{\lparen}{\rparen}   %for () after an operator
	\newcommand\myparagraph{%
    \@startsection{paragraph}{4}{0mm}%
        {-\baselineskip}%
		{-0.7\baselineskip}%
        {\normalfont\normalsize\bfseries}}
\newtheorem{theorem}{Theorem}
\newtheorem{proposition}{Proposition}
\newtheorem{corollary}{Corollary}
\newtheorem{observation}{Observation}
\theoremstyle{definition}
\newtheorem{example}{Example}
\begin{document}

\title{Absolute value linear programming}

\author[1]{Milan Hlad\'{i}k}

\author[2,3]{David Hartman}

\affil[1]{Department of Applied Mathematics, Faculty of Mathematics and Physics, Charles University, Malostransk\'{e} n\'{a}m. 25, Prague 1, 11800, Czech Republic}

\affil[2]{Computer Science Institute of Charles University, Faculty of Mathematics and Physics, Charles University, Malostransk\'{e} n\'{a}m. 25, Prague 1, 11800, Czech Republic, hartman@iuuk.mff.cuni.cz}

\affil[3]{Institute of Computer Science of the Czech Academy of Sciences, Czech Academy of Sciences, Pod Vod\'{a}renskou v\v{e}\v{z}\'{i} 271/2, Prague 8, 18207, Czech Republic}

\date{}
\maketitle

\begin{abstract}
We deal with linear programming problems involving absolute values in their formulations, so that they are no more expressible as standard linear programs. The presence of absolute values causes the problems to be nonconvex and nonsmooth, so hard to solve. In this paper, we study fundamental properties on the topology and the geometric shape of the solution set, and also conditions for convexity, connectedness, boundedness and integrality of the vertices. Further, we address various complexity issues, showing that many basic questions are NP-hard to solve. We show that the feasible set is a (nonconvex) polyhedral set and, more importantly, every nonconvex polyhedral set can be described by means of absolute value constraints. We also provide a necessary and sufficient condition when a KKT point of a nonconvex quadratic programming reformulation solves the original problem.
\end{abstract}

\textbf{Keywords:}\textit{ linear programming, nonsmooth optimization, interval analysis, NP-hardness.}

%%%%%%%%%%%%%%%%%%%%%%%%%%%%%%%%%%%%%%%%%%%%%%%%%%%%%%%%%%%%%%% 
% INTRODUCTION
%%%%%%%%%%%%%%%%%%%%%%%%%%%%%%%%%%%%%%%%%%%%%%%%%%%%%%%%%%%%%%% 
\section{Introduction}

Mangasarian~\cite{Man2007} introduced absolute value programming as mathematical programming problems involving absolute values. So far, researchers have paid primarily attention to absolute value equations. More general systems or even optimization problems were studied quite rarely; some of the few works include \cite{Man2015b,YamFuk2014}. Our aim is to change this focus and turn the attention to linear programs with absolute values.

%%%%%
\myparagraph{Notation}
Given a matrix $A$, we use $A_{i*}$ for its $i$-th row and $A_{*i}$ for its $i$-th column. Next, $\diag(v)$ is the diagonal matrix with entries given by vector~$v$, $I_n$ is the identity matrix of size $n\times n$, $e_i$ is its $i$-th column and $e=(1,\dots,1)^T$ is the vector of ones (with convenient dimension). Given a set $\mna{M}$, we use $\conv\mna{M}$ for a convex hull of $\mna{M}$.
%The sign of a real $r$ is $\sgn(r)=1$ if $r>0$, $\sgn(r)=-1$ if $r<0$ and $\sgn(r)=0$ if $r=0$.
The sign of a real $r$ is $\sgn(r)=1$ if $r\geq0$ and $\sgn(r)=-1$ otherwise. 
The positive and negative parts of a real $r$ are defined $r^+=\max(r,0)$ and $r^-=\max(-r,0)$, respectively. 
For vector or matrix arguments, the absolute value, the sign function, the positive and negative parts are applied entrywise.

%%%%%
\myparagraph{Absolute value linear programming}
We introduce an absolute value LP problem in the form
\begin{align}\label{avlp}
\max\ c^Tx \st Ax-D|x|\leq b,
\end{align}
where $c\in\R^n$, $b\in\R^m$ and $A,D\in\R^{m\times n}$. By $f^*$ we denote the optimal value, and by $\mna{M}$ we denote the feasible set. Throughout the paper we assume that $D$ is nonnegative:
\begin{quote}
\textbf{Assumption.} $D\geq0$.
\end{quote}
%In addition, we assume that $D\geq0$ throughout the paper.

Notice that our assumptions are made without loss of generality for the following reason. The objective function is considered as a linear function since otherwise it can be transformed into the constraints by standard techniques. Equations can also be split to double inequalities by standard means (this need not be the best way from the numerical point of view, but for mathematical analysis we can do it with no harm). 

Nonnegativity of matrix $D$ also does not cause harm to generality. If this is not the case, we write $D=D^+-D^-$, where $D^+\geq0$ and $D^-\geq0$ are the entrywise positive and negative parts of $D$, respectively. Problem \eqref{avlp} then reads 
\begin{align}\label{avlp1}
\max\ c^Tx \st Ax-D^+|x|+D^-|x|\leq b,
\end{align}
which is equivalent to
\begin{align}\label{avlp2}
\max\ c^Tx \st Ax-D^+|x|+D^-y\leq b,\ -y\leq x\leq y.
\end{align}
Due to nonnegativity of $D^-$ is this transformation equivalent: If $x$ solves~\eqref{avlp1}, then $x$ and $y\coloneqq|x|$ solves~\eqref{avlp2}. Conversely, if $x,y$ solves~\eqref{avlp2}, then $x$ solves~\eqref{avlp1}. Problem \eqref{avlp2} follows the structure of \eqref{avlp}, which concludes the explanation.

%%%%%
\myparagraph{Roadmap}
The aim of this paper is to address the fundamental solvability, geometric and computational properties of the problem. In particular, the paper is organized as follows. Section~\ref{ssMotiv} provides a motivation, showing that many problems can be naturally expressed as absolute value LP problems. In Section~\ref{sBasicProp}, we study the geometric structure of the feasible set and give some conditions for boundedness, solvability, connectedness and convexity. We also address the computational complexity issues and propose a certain type of duality. Section~\ref{sNonconvPoly} is devoted to the relation of the feasible set and general (nonconvex) polyhedral sets; we show that every polyhedral set admits an absolute value description. Section~\ref{sIntegr} handles integrality of the vertices of~\eqref{avlp1}. In Section~\ref{sQp}, we consider a quadratic programming reformulation of the absolute value LP problem and provide a characterization when KKT points are optimal solutions of \eqref{avlp1}. Eventually, Section~\ref{sBstab} focuses on the special situation of the so called basis stability, in which the problem is efficiently solvable.

%%%%%%%%%%%%%%%%%%%%%%%%%%%%%%%%%%%%%%%%%%%%%%%%%%%%%%%%%%%%%%% 
\subsection{Motivation}\label{ssMotiv}

Many (computationally hard) problems can easily be reformulated by means of absolute value LP. In this section, we mention some of them.

%%%%%
\myparagraph{Absolute value equations}
The feasibility problem
\begin{align*}
Ax+|x|=b
\end{align*}
is called the absolute value equation and has attracted the attention of many researchers in recent years \cite{Hla2018b, ManMey2006, Man2007, MooKet2021a, Pro2009, Roh2012e, ZamHla2021a, ZamHla2020aa}. 
Despite its simple formulation, the problem is NP-hard~\cite{Man2007} to solve. 

Obviously, it can be solved by absolute value LP since problem \eqref{avlp} is more general.

Further, as observed many times (see \cite{ManMey2006,Man2007,Pro2009}), the problem of absolute value equations is equivalent to the standard linear complementarity problem. Therefore, \eqref{avlp} has the potential to handle various optimization problems with complementarity constraints.

%%%%%
\myparagraph{Integer linear programming}
Consider a 0-1 integer linear program
\begin{align}\label{intLinP}
\max\ c^Tx \st Ax\leq b,\ x\in\{0,1\}^n.
\end{align}
The problem equivalently states
\begin{align*}
\max\ c^Tx \st Ax\leq b,\ |2x-e|=e,
\end{align*}
or,
\begin{align}\label{intLinPasAVLP}
\max\ c^Tx \st Ax\leq b,\ |y|=e,\ 2x-y=e,
\end{align}
which is an absolute value linear program.

\begin{comment}
%%%%%
\myparagraph{Quadratic programming}
The quadratic programming problem
\begin{align*}
\max\ x^TQx \st Ax\leq b
\end{align*}
is NP-hard \cite{Vav1991} even if matrix $Q$ is symmetric positive semidefinite. Such a matrix possesses a square root, that is, a symmetric positive semidefinite matrix $R$ such that $R^2=Q$. Thus the quadratic program can be formulated as
\end{comment}

In view of this transformation, many other NP-hard problems are directly reformulated by means of absolute value LP. This is particularly the case for problems arising in graph theory, including the maximum clique, the maximum cut, the vertex cover, the maximum matching, or the graph coloring problem. 

%%%%%
\myparagraph{Disjunctive programming \cite{Bal2018}}
Since $\min(a,b)=\frac{1}{2}(a+b-|a-b|)$, we can formulate the disjunctive inequality
\begin{align*}
f(x)\leq 0 \ \ \vee\ \ g(x)\leq0
\end{align*}
as the absolute value inequality
\begin{align*}
f(x)+g(x)-|f(x)-g(x)|\leq0.
\end{align*}
The formula for the minimum is recursively expanded, e.g.,
\begin{align*}
\min(a,b,c)&=\min(a,\min(b,c))\\
	  &=\frac{1}{4}\big(2a+b+c-|b-c|-\big|2a-b-c+|b-c|\big|\big).
\end{align*}
Thus disjunctions of more terms is easily extended. The expression becomes rather cumbersome, but with help of additional variables a convenient form is derived.

For equations, we can effectively handle disjunctions of more than one equation. To be  concrete, consider a disjunction of two systems of equations
\begin{align*}
f_1(x)=\dots=f_m(x)=0 \ \ \vee\ \ g_1(x)=\dots=g_{\ell}(x)=0.
\end{align*}
Its absolute value reformulation consists of $m\ell$ equations
\begin{align*}
f_i(x)+g_j(x)=|f_i(x)-g_j(x)|,\quad \forall i,j.
\end{align*}

%%%%%
\myparagraph{Interval linear programming \cite{Hla2012a,Roh2006b}}
Let
\begin{align*}
[A\pm D]
=[A-D,A+D]
\coloneqq\{\tilde{A}\mmid |\tilde{A}-A|\leq D\}
\end{align*}
be the interval matrix with the midpoint $A$ and the radius~$D$, or alternatively, the lower bound $A-D$ and the upper bound $A+D$.	

Consider a class of linear programs
\begin{align}\label{ilp}
f(\tilde{A})=\max\ c^Tx \st \tilde{A}x\leq b,
\end{align}
where $\tilde{A}\in[A\pm D]$; this is a type of an interval LP problem. By the theory of interval linear programming on the range of the optimal values~\cite{ChinRam2000,Hla2012a,Mra1998,Roh1980}, the value $f^*$ is the best achievable optimal value of the class of LP problems. That is,
\begin{align}\label{ilpOf}
f^*=\max_{\tilde{A}\in[A\pm D]}\ f(\tilde{A}).
\end{align}
Further, the feasible set of \eqref{avlp} is the union of feasible sets of \eqref{ilp},
\begin{align}\label{aviCup}
\{x\mmid Ax-D|x|\leq b\}
=\bigcup_{\tilde{A}\in[A\pm D]} \{x\mmid \tilde{A}x\leq b\}.
\end{align}

%%%%%%%%%%%%%%%%%%%%%%%%%%%%%%%%%%%%%%%%%%%%%%%%%%%%%%%%%%%%%%% 
\section{Basic properties}\label{sBasicProp}
%%%%%%%%%%%%%%%%%%%%%%%%%%%%%%%%%%%%%%%%%%%%%%%%%%%%%%%%%%%%%%% 

%%%%%
%\myparagraph{Basic properties}
In this section, we discuss basic properties of problem \eqref{avlp} and particularly of the feasible set
$$
\mna{M}=\mna{M}(b)=\{x\in\R^n\mmid Ax\leq b+D|x|\}.
$$
We use the notation $\mna{M}(b)$ when the right-hand side vector $b$ is subject to some changes. 

First, observe that problem \eqref{avlp} is nonconvex and nonsmooth optimization problem. Even worse, the feasible set $\mna{M}$ can be disconnected. Consider, for example, the constraint $|x|=e$, which characterizes the number of $2^n$ isolated points $(\pm1,\dots,\pm1)^T$.

The problem becomes tractable, provided that we restrict to any orthant; we get rid of the absolute value then, and \eqref{avlp} turns into an LP problem. More concretely, let $s\in\{\pm1\}^n$ and consider the orthant defined by the sign vector~$s$, that is, the orthant $\diag(s)x\geq0$. Within this orthant, the feasible set $\mna{M}$ reads
\begin{align}\label{avlpOrth}
(A-D\diag(s))x\leq b,\ \diag(s)x\geq0
\end{align}
since we substitute $|x|=\diag(s)x$. As a consequence, we have

\begin{observation}\label{obsDecomp}
The feasible set $\mna{M}$ forms a convex polyhedral set inside each orthant. 
\end{observation}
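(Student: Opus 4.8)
The plan is to unpack the substitution that precedes the statement and observe that nothing more is needed. Fix a sign vector $s\in\{\pm1\}^n$ and let $O_s=\{x\in\R^n\mmid \diag(s)x\geq0\}$ be the corresponding (closed) orthant. The first step is to verify the identity $|x|=\diag(s)x$ for every $x\in O_s$: componentwise, if $s_i=1$ then $x_i\geq0$, so $|x_i|=x_i=s_ix_i$, and if $s_i=-1$ then $x_i\leq0$, so $|x_i|=-x_i=s_ix_i$. Hence on $O_s$ the nonsmooth constraint $Ax-D|x|\leq b$ becomes the genuinely linear constraint $(A-D\diag(s))x\leq b$.

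The second step is to conclude that
\begin{align*}
\mna{M}\cap O_s=\{x\in\R^n\mmid (A-D\diag(s))x\leq b,\ \diag(s)x\geq0\},
\end{align*}
which is an intersection of finitely many closed half-spaces and is therefore a convex polyhedral set. Since $O_s$ is itself such a set, this is immediate from the previous step.

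I expect no real obstacle here; the statement is essentially a reformulation of \eqref{avlpOrth}. The only point worth a word of care is that orthants overlap on their boundaries (a point $x$ with $x_i=0$ for some $i$ lies in $O_s$ for several $s$), but this redundancy is harmless: on the overlap the different substitutions agree because $|x_i|=0=s_ix_i$ whenever $x_i=0$, so the description of $\mna{M}$ inside any orthant containing a given point is consistent, and in particular each such piece is convex and polyhedral as claimed.
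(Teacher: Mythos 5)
Your proof is correct and follows exactly the paper's own reasoning: the paper justifies the observation by the same substitution $|x|=\diag(s)x$ on the orthant $\diag(s)x\geq0$, turning the constraints into the linear system \eqref{avlpOrth}. Your additional remark about consistency on orthant boundaries is a harmless (and accurate) extra precaution, not a departure from the argument.
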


Hence, the feasible set is the union of at most $2^n$ convex polyhedra. As another consequence, we can solve \eqref{avlp} directly by a reduction to $2^n$ LP problems
\begin{align*}
f^*=\max_{s\in\{\pm1\}^n}\ \max\ c^Tx \st (A-D\diag(s))x\leq b,\ \diag(s)x\geq0.
\end{align*}
Obviously, if the $i$th column of $D$ is zero, then we do not need to distinguish the sign of $s_i$ and the overall complexity decreases. Therefore the complexity is to solve $2^{\ell}$ linear programs, where $\ell$ is the number of nonzero columns of~$D$.

This simplification is not artificial since many absolute value linear programs arising from other fields may have a naturally reduced number of non-zero columns of $D$. Consider the integer linear program~\eqref{intLinP}. Using the reformulation~\eqref{intLinPasAVLP} we get the following absolute value linear program in matrix form,
\begin{align*}
\begin{pmatrix} 
 A & 0_{m,n} \\ 2I_n & -I_n \\ -2I_n & I_n \\
 0_n & 0_n \\  0_n & I_n \\  0_n & -I_n 
\end{pmatrix}
\begin{pmatrix} x \\ y \end{pmatrix}
-\begin{pmatrix} 
 0_{m,n} & 0_{m,n} \\ 0_n & 0_n \\ 0_n & 0_n \\
 0_n & I_n \\  0_n & 0_n \\  0_n & 0_n 
\end{pmatrix}
%\begin{pmatrix} |x| \\ |y| \end{pmatrix}
\begin{vmatrix} x \\ y \end{vmatrix}
\leq 
\begin{pmatrix} 
 b \\ e \\ -e \\ -e \\ e \\ e
\end{pmatrix}.
%\tag*{\qedhere}
\end{align*}
Even though the number of constraints and variables increased, the number of nonzero columns of $D$ remains the same. Hence the orthant-by-orthant decomposition complexity remains the same, too.

Eventually, the orthant-by-orthant decomposition partially reveals the structure of the vertices of the convex hull of~$\mna{M}$.

\begin{proposition}\label{propVert}
%Let $x^*$ be a vertex of the convex hull of $\mna{M}$. Then there exists $s\in\{\pm1\}^n$ such that $x^*$ lies in the orthant $\diag(s)x\geq0$ and is a vertex of $(A-D\diag(s))x\leq b$.
Let $x^*\in\mna{M}$ and $s\coloneqq\sgn(x^*)$. If $x^*$ is a vertex of $\conv\mna{M}$, then it is a vertex of the convex polyhedral sets 
\begin{align}\label{sysPropVert1}
(A-D\diag(s))x\leq b
\end{align}
and
\begin{align}\label{sysPropVert2}
(A-D\diag(s))x\leq b,\ \ \diag(s)x\geq0.
\end{align}
\end{proposition}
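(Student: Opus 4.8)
The plan is to work with the orthant decomposition from Observation~\ref{obsDecomp} and argue contrapositively: if $x^*$ fails to be a vertex of one of the two polyhedra, I build a nontrivial segment through $x^*$ lying inside $\mna{M}$, which contradicts $x^*$ being a vertex of $\conv\mna{M}$. Note that $\conv\mna{M}\supseteq\mna{M}$, and a vertex of $\conv\mna{M}$ that happens to lie in $\mna{M}$ cannot be the midpoint of a segment contained in $\mna{M}$ (and in fact cannot lie in the relative interior of any segment in $\conv\mna{M}$). So it suffices to show: if $x^*$ is not a vertex of the system \eqref{sysPropVert2}, then $x^*$ is the midpoint of a nondegenerate segment contained in $\mna{M}$.

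First I would recall that on the orthant $\diag(s)x\ge 0$, where $s=\sgn(x^*)$, the set $\mna{M}$ coincides exactly with the polyhedron \eqref{sysPropVert2}, because there $|x|=\diag(s)x$. Hence $x^*\in\mna{M}$ implies $x^*$ satisfies \eqref{sysPropVert2}. Suppose $x^*$ is not a vertex of \eqref{sysPropVert2}; then there is a direction $d\ne 0$ with $x^*\pm\varepsilon d$ both feasible for \eqref{sysPropVert2} for some small $\varepsilon>0$. Since \eqref{sysPropVert2} is precisely $\mna{M}$ restricted to this orthant, both points $x^*\pm\varepsilon d$ lie in $\mna{M}$. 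Thus $x^*$ is the midpoint of a nondegenerate segment in $\mna{M}\subseteq\conv\mna{M}$, contradicting that $x^*$ is a vertex of $\conv\mna{M}$. This handles \eqref{sysPropVert2}.

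For \eqref{sysPropVert1} I would reduce to the previous case. A vertex of the smaller polyhedron \eqref{sysPropVert2} need not be a vertex of the larger polyhedron \eqref{sysPropVert1}, so a direct "fewer constraints" argument does not immediately work; instead I use sign information. The subtle point is the coordinates $i$ with $x^*_i=0$: for those, $s_i$ was chosen (say $s_i=1$ by the convention $\sgn(0)=1$), but the constraint $\diag(s)x\ge0$ active at $x^*$ need not persist. The key observation is that if $x^*_i=0$ then the $i$-th column of $D$ does not matter locally, since both $A-D\diag(s)$ and $A+De_ie_i^T-D\diag(s)$ (flipping the sign in coordinate $i$) act identically at points with $x_i$ of either sign near $0$; more precisely, a direction $d$ feasible for \eqref{sysPropVert1} at $x^*$ that moves into a neighboring orthant still keeps $x^*\pm\varepsilon d$ in $\mna{M}$, because in each orthant visited, the relevant linear description of $\mna{M}$ is obtained from $A-D\diag(s)$ by possibly changing signs only in the columns indexed by the zero coordinates of $x^*$, and since $Dx$-terms vanish there to first order, the inequality $(A-D\diag(s))x\le b$ being (strictly, up to $O(\varepsilon)$) satisfied transfers. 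So if $x^*$ is not a vertex of \eqref{sysPropVert1}, pick such a $d$; then $x^*\pm\varepsilon d\in\mna{M}$, again contradicting vertexhood in $\conv\mna{M}$.

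The main obstacle is precisely this last bookkeeping around zero coordinates of $x^*$: one must verify carefully that moving along $d$ (which may cross orthant boundaries at coordinates where $x^*$ vanishes) keeps the point feasible for $\mna{M}$, and that the absolute-value term contributes only higher-order corrections there. I would formalize this by writing, for small $t$, $|x^*+td| = \diag(s)(x^*+td) + r(t)$ where $r(t)$ is supported on $\{i: x^*_i=0\}$ and satisfies $0\le r(t)\le |t|\,|d|$ entrywise, then estimating $A(x^*+td) - D|x^*+td| - b = t\,(A-D\diag(s))d - D r(t) + \big((A-D\diag(s))x^* - b\big)$; using $D\ge 0$, $Dr(t)\ge 0$, so this is $\le t(A-D\diag(s))d + \big((A-D\diag(s))x^*-b\big)$, and a symmetric lower-order argument for $-t$. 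For indices where the constraint $(A-D\diag(s))x^*\le b$ is strict, small $t$ is harmless; for active indices, non-vertexness of \eqref{sysPropVert1} supplies $d$ with $(A-D\diag(s))d$ having a zero in those rows — but one must also ensure the $-Dr(t)$ perturbation does not violate active rows, which it cannot since it only helps ($\le$). Once this estimate is in place, both inclusions follow cleanly.
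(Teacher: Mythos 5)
Your proof is correct, and the key fact you uncover --- that the term $-D\,r(t)$ ``only helps'' because $D\ge0$ --- is precisely the observation the paper's proof rests on. However, you present it as a local perturbation estimate and describe the bookkeeping around the zero coordinates of $x^*$ as ``the main obstacle,'' when in fact there is no obstacle: since $D\ge0$ and $|x|\ge\diag(s)x$ holds for \emph{every} $x\in\R^n$ and every $s\in\{\pm1\}^n$ (not only near $x^*$), one has $Ax-D|x|\le(A-D\diag(s))x\le b$ for every $x$ satisfying \eqref{sysPropVert1}. Hence the entire polyhedron \eqref{sysPropVert1} (and a fortiori \eqref{sysPropVert2}) is a convex subset of $\mna{M}\subseteq\conv\mna{M}$ containing $x^*$, and an extreme point of $\conv\mna{M}$ is automatically an extreme point of any convex subset to which it belongs. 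That is the paper's one-line argument; it disposes of both systems at once with no segments, no case distinction between active and inactive rows, and no orthant-crossing analysis. Your treatment of \eqref{sysPropVert2} (as $\mna{M}$ restricted to the orthant) is fine, and your treatment of \eqref{sysPropVert1} does go through once your estimate is written out --- but the global inclusion renders the ``higher-order corrections'' discussion superfluous, and recognizing it would have collapsed your two cases into one.
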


\begin{proof}
First notice that $x^*$ is feasible for both \eqref{sysPropVert1} and~\eqref{sysPropVert2}. Since $x^*$ is a vertex of $\conv\mna{M}$, then it must be a vertex of both these subsets. Indeed, \eqref{sysPropVert1} is a subset of $M\subseteq\conv\mna{M}$ since for every $x$ satisfying \eqref{sysPropVert1} we have $Ax-D|x|\leq(A-D\diag(s))x\leq b$.
\end{proof}

%%%%%
\myparagraph{Boundedness}
The orthant-by-orthant decomposition approach applies to boundedness, too. The feasible set $\mna{M}$ is bounded if and only if \eqref{avlpOrth} is bounded for every $s\in\{\pm1\}^n$. In other words, for every $s\in\{\pm1\}^n$, the system
\begin{align*}
(A-D\diag(s))x\leq 0,\ \diag(s)x\geq0
\end{align*}
has only the trivial solution $x=0$. Equivalently, we state it as follows.

\begin{observation}\label{obsBound}
The feasible set $\mna{M}(b)$ is bounded for each $b\in\R^n$ if and only if the system
\begin{align}\label{sysObsBound}
Ax-D|x|\leq0
\end{align}
has only the trivial solution $x=0$.
\end{observation}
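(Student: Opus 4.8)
The plan is to reduce the statement to the orthant-by-orthant decomposition already established in Observation \ref{obsDecomp} and the discussion preceding it. Recall that for a fixed sign vector $s\in\{\pm1\}^n$, the portion of $\mna{M}(b)$ inside the orthant $\diag(s)x\geq0$ is exactly the convex polyhedron described by \eqref{avlpOrth}, namely $(A-D\diag(s))x\leq b$, $\diag(s)x\geq0$. A union of finitely many sets is bounded iff each member is bounded, so $\mna{M}(b)$ is bounded iff each of these $2^n$ polyhedra is bounded.

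First I would invoke the standard fact from polyhedral theory that a nonempty polyhedron $\{x\mmid Cx\leq d,\ Ex\geq 0\}$ is bounded iff its recession cone $\{x\mmid Cx\leq 0,\ Ex\geq 0\}$ is trivial, i.e.\ equals $\{0\}$. Applying this with $C=A-D\diag(s)$, $E=\diag(s)$, the orthant piece for $s$ is bounded iff the cone $(A-D\diag(s))x\leq0$, $\diag(s)x\geq0$ contains only $x=0$. There is a subtlety: the recession-cone criterion presupposes the polyhedron is nonempty. If the $s$-orthant piece is empty it is vacuously bounded, and the cone $(A-D\diag(s))x\leq0$, $\diag(s)x\geq0$ still contains $0$; but one must check it cannot contain a nonzero point in that case — however, if such a nonzero $x^{0}$ existed with $\diag(s)x^{0}\geq 0$ and $(A-D\diag(s))x^{0}\leq 0$, then for any $b$ the point $tx^{0}$ would be feasible in \eqref{avlpOrth} once $t$ is large enough only if some base point exists, so emptiness for a \emph{given} $b$ is fine; what we actually need is the equivalence for \emph{all} $b$, and choosing $b\geq 0$ makes $x=0$ feasible in every orthant piece, hence all pieces nonempty simultaneously. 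This is the one place needing a little care, and I would phrase the argument using a fixed convenient $b$ (e.g.\ $b=e$ or $b\geq0$) to guarantee nonemptiness, then note that if boundedness fails for some $b$ it fails for this convenient $b$ as well, since enlarging $b$ only enlarges $\mna{M}(b)$.

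Next I would translate the orthant-wise condition back into the single AVE-style condition \eqref{sysObsBound}. A vector $x$ solves $Ax-D|x|\leq 0$ iff, taking $s\coloneqq\sgn(x)$, we have $\diag(s)x\geq 0$ and $|x|=\diag(s)x$, whence $(A-D\diag(s))x = Ax-D|x|\leq 0$; conversely a nonzero solution of $(A-D\diag(s))x\leq 0$, $\diag(s)x\geq 0$ for some $s$ satisfies $|x|\leq\diag(s)x$ entrywise and, since $D\geq 0$ by the standing Assumption, $Ax-D|x|\leq (A-D\diag(s))x\leq 0$. Thus \eqref{sysObsBound} has a nonzero solution iff some orthant cone does. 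Combining the three reductions gives: $\mna{M}(b)$ bounded for all $b$ $\iff$ every orthant cone is trivial $\iff$ \eqref{sysObsBound} has only the trivial solution.

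The main obstacle is not any deep step but the bookkeeping around nonemptiness of the orthant pieces, which I would handle once and for all by restricting attention to a right-hand side $b\geq 0$; everything else is the routine recession-cone characterization of bounded polyhedra together with the sign-substitution identity $|x|=\diag(\sgn x)x$ and the nonnegativity of $D$.
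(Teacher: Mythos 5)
Your proof is correct and follows essentially the same route as the paper, which justifies the observation exactly by the orthant-by-orthant decomposition together with the triviality of the recession cones $(A-D\diag(s))x\leq 0$, $\diag(s)x\geq 0$. One small polish: when $\diag(s)x\geq 0$ with $s\in\{\pm1\}^n$ you actually have the \emph{equality} $|x|=\diag(s)x$, which is what you need --- from the inequality $|x|\leq\diag(s)x$ and $D\geq0$ alone you would only get $Ax-D|x|\geq(A-D\diag(s))x$, i.e.\ the wrong direction.
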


\begin{proposition}
It is a co-NP-complete problem to check if $\mna{M}$ is bounded.
\end{proposition}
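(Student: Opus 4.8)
The plan is to establish both membership in co-NP and co-NP-hardness. For membership, I would use Observation~\ref{obsBound}: the set $\mna{M}$ fails to be bounded (for some right-hand side) if and only if the homogeneous system $Ax - D|x| \leq 0$ has a nonzero solution. By Observation~\ref{obsDecomp} and the orthant decomposition, such a nonzero solution exists if and only if for some sign vector $s \in \{\pm1\}^n$ the polyhedral cone $(A - D\diag(s))x \leq 0$, $\diag(s)x \geq 0$ contains a nonzero point. A short certificate for unboundedness is thus a single sign vector $s$ together with a nonzero rational $x$ in the corresponding cone (one can take $x$ to be a vertex of the cone intersected with a normalizing hyperplane such as $e^T\diag(s)x = 1$, which has polynomial bit-size by standard LP bounds); feasibility of this certificate is checked in polynomial time. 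Hence ``$\mna{M}$ unbounded'' is in NP, so ``$\mna{M}$ bounded'' is in co-NP.

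For hardness I would reduce from a known co-NP-complete problem about interval matrices, namely checking whether every matrix $\tilde{A} \in [A \pm D]$ is nonsingular (equivalently, regularity of an interval matrix), or directly from the complement of the problem ``does $Ax - D|x| \leq 0$ have a nonzero solution,'' which is essentially the well-known NP-complete problem of deciding solvability of $\tilde{A}x \le 0$, $x \ne 0$ over $\tilde A \in [A\pm D]$; see Rohn's results on interval linear systems. Concretely, given an instance one constructs $A$, $D$ so that, by \eqref{aviCup}, the homogeneous feasible set $\{x : Ax - D|x| \le 0\}$ equals $\bigcup_{\tilde A \in [A\pm D]}\{x : \tilde A x \le 0\}$; arranging (e.g.\ by appending rows $\pm x$ bounds or working with a square $\tilde A$ forced to have a null vector) that this union is nontrivial precisely when the source instance is a ``yes'' instance gives NP-hardness of unboundedness, hence co-NP-hardness of boundedness.

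The main obstacle is the hardness direction: one must pick the right NP-complete source problem and show the homogeneous absolute value system encodes it faithfully, i.e.\ that the constructed system has a nonzero solution exactly in the ``yes'' case and only the trivial solution otherwise. A clean route is to mimic the $2^n$-orthant structure: design $D$ with few nonzero columns is \emph{not} possible here (that would make it easy), so instead I would lean on an existing reduction — for instance, encoding a subset-sum or a partition-type instance, or invoking Rohn's theorem that checking whether $|Ax| \le D|x|$ (equivalently a homogeneous interval inequality) has a nonzero solution is NP-complete — and then translate $|Ax| \le D|x|$ into the form $A'x - D'|x| \le 0$ by splitting the two-sided inequality into $Ax - D|x| \le 0$ and $-Ax - D|x| \le 0$. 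Verifying that this split preserves the ``only trivial solution'' property in the negative case, and that bit-sizes stay polynomial, is the technical heart; the co-NP membership part is routine once Observation~\ref{obsBound} is in hand.
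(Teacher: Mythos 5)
Your proposal is correct in outline, and the co-NP membership half matches the paper's (certificate: a nontrivial solution of the homogeneous system \eqref{sysObsBound}, of polynomial size by the orthant decomposition). The hardness half, however, takes a genuinely different route. The paper reduces directly from Set-Partitioning: it encodes $|x|=e$, $a^Tx=0$ as the inhomogeneous system \eqref{sysPrProfNpBound1} and then homogenizes it with one auxiliary scalar $y\ge 0$ (replacing $e$ by $ey$), showing that nontrivial solvability of the homogenized system \eqref{sysPrProfNpBound2} is equivalent to feasibility of the original one; Observation~\ref{obsBound} then finishes the argument. You instead invoke the known co-NP-completeness of interval matrix regularity, i.e.\ Rohn's characterization that $[A\pm D]$ is singular iff $|Ax|\le D|x|$ has a nonzero solution, and split the two-sided inequality into $Ax-D|x|\le0$, $-Ax-D|x|\le0$, which is exactly of the form \eqref{sysObsBound} with a nonnegative $D$-block. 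That reduction is valid and in fact shorter than you fear: the ``technical heart'' you flag (preservation of the only-trivial-solution property under the split) is immediate, since $|Ax|\le D|x|$ is literally the conjunction of the two one-sided inequalities. What the paper's approach buys is self-containedness (an elementary homogenization of Set-Partitioning, no external interval-analysis theorem needed); what yours buys is brevity at the cost of importing the Poljak--Rohn result, which you should cite precisely rather than gesturing at ``Rohn's results.'' Two small points to tighten: commit to one reduction instead of listing alternatives, and note that under the fixed-$b$ reading of the statement your certificate for unboundedness needs, in addition to the nonzero cone point, a feasible point of the corresponding orthant polyhedron (the paper includes one; for the reduction itself taking $b=0$ makes the two readings coincide, since then $0\in\mna{M}$ and every nontrivial solution of \eqref{sysObsBound} yields an unbounded ray).
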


\begin{proof}
We use a reduction from the Set-Partitioning problem: 
\begin{center}
Given $a\in\mathbb{Z}^n$, exists $x\in\{\pm1\}^n: a^Tx=0$?
\end{center}
We formulate it as
$$
|x|=e,\ a^Tx=0,
$$
which we further rewrite as
\begin{align}\label{sysPrProfNpBound1}
x\leq e,\ -x\leq e,\ a^Tx=0,\ e\leq |x|.
\end{align}
We claim that its feasibility is equivalent to non-trivial feasibility of
\begin{align}\label{sysPrProfNpBound2}
x\leq ey,\ -x\leq ey,\ a^Tx=0,\ ey\leq |x|,\ y\geq0.
\end{align}
Indeed, if $x^*$ solves the former system, then $x^*$ and $y^*\coloneqq1$ solves the latter. Conversely, suppose that $x^*,y^*$ solves \eqref{sysPrProfNpBound2}. If $y^*>0$, then $\frac{1}{y^*}x^*$ solves \eqref{sysPrProfNpBound1}. Otherwise, if  $y^*=0$, then necessarily $x^*=0$ and thus the solution is trivial. Thus, we proved co-NP-hardness, in view of Observation~\ref{obsBound}. 

The certificate for unboundedness of $\mna{M}$ is any feasible solution and a non-trivial solution of \eqref{sysObsBound}. Therefore, the problem is co-NP-complete.
\end{proof}

%%%%%
\myparagraph{Solvability}
To ensure solvability for each right-hand side vector $b\in\R^n$, it is sufficient and necessary that $Ax-D|x|\leq -e$ is solvable.

\begin{proposition}
The feasible set $\mna{M}(b)$ is nonempty for each $b\in\R^n$ if and only if it is nonempty for $b\coloneqq-e$.
\end{proposition}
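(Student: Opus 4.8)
The plan is to dispatch the forward implication trivially and to establish the reverse one by a dilation argument based on the positive homogeneity of the constraint map $g(x)\coloneqq Ax-D|x|$.

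The ``only if'' direction needs nothing: if $\mna{M}(b)$ is nonempty for every right-hand side, then it is nonempty for the particular choice $b\coloneqq -e$.

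For the ``if'' direction, I would start from any point $x^*\in\mna{M}(-e)$, i.e.\ $g(x^*)=Ax^*-D|x^*|\leq -e$. The key step is to observe that $g$ is positively homogeneous of degree one: for any scalar $\alpha\geq0$ one has $|\alpha x^*|=\alpha|x^*|$, hence $g(\alpha x^*)=\alpha\,g(x^*)$. Given an arbitrary $b$, I would then pick $\alpha>0$ large enough that $-\alpha e\leq b$ holds entrywise -- possible since $-\alpha e$ tends componentwise to $-\infty$ as $\alpha\to\infty$; concretely $\alpha\coloneqq\max\{1,\max_i(-b_i)\}$ works -- and set $z\coloneqq\alpha x^*$. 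Then $g(z)=\alpha\,g(x^*)\leq\alpha(-e)=-\alpha e\leq b$, so $z\in\mna{M}(b)$, which proves nonemptiness for every $b$.

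There is essentially no obstacle here: the entire content is the positive homogeneity of $x\mapsto Ax-D|x|$, which lets a single ``deep interior'' witness of $\mna{M}(-e)$ be scaled up so as to land inside $\mna{M}(b)$ for an arbitrarily prescribed $b$. Note that the assumption $D\geq0$ is not even used in this argument.
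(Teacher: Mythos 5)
Your proof is correct and follows essentially the same route as the paper's: both scale a witness $x^*\in\mna{M}(-e)$ by a positive factor, exploiting the positive homogeneity of $x\mapsto Ax-D|x|$. The only cosmetic difference is your uniform choice $\alpha=\max\{1,\max_i(-b_i)\}$, which avoids the case split on $b\geq 0$ that the paper's choice $\alpha=\max_j b_j/(Ax^*-D|x^*|)_j$ requires.
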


\begin{proof}
\quo{If.} 
Let $b\in\R^n$ be arbitrary and let $x^*\in\R^n$ be such that $Ax^*-D|x^*|\leq -e$. If $b\geq0$, then $x^*\in\mna{M}(b)$ and we are done. Otherwise, there exists $k$ such that $b_k < 0$. Define $x^{\alpha}\coloneqq \alpha x^*$, where $\alpha=\max_j\frac{b_j}{(Ax^*-D|x^*|)_j}$.
% and call $\ell$ the index of the corresponding maximum
Note that, $(Ax^*-D|x^*|)_i < 0$ for every~$i$. Since there is at least one $b_i < 0$, namely $b_k$, the resulting $\alpha$ is positive. Therefore
$(Ax^{\alpha}-D|x^{\alpha}|) = \alpha (Ax^*-D|x^*|)\leq b$.

\quo{Only if.} 
Obvious.
\end{proof}

However, it turns out that checking feasibility of the only instance with $b=e$ is a computationally hard problem.

\begin{proposition}
It is NP-complete to check $\mna{M}(-e)\not=\emptyset$, that is, feasibility of the system $Ax-D|x|\leq -e$. 
\end{proposition}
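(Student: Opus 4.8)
The plan is to prove NP-completeness by establishing both membership in NP and NP-hardness. Membership in NP is easy: given a candidate solution $x^*$, we can guess the sign vector $s\coloneqq\sgn(x^*)$ (or simply read it off from $x^*$) and verify in polynomial time that $(A-D\diag(s))x^*\leq -e$ and $\diag(s)x^*\geq 0$, which certifies $x^*\in\mna{M}(-e)$. So the substance of the proof is the hardness reduction.

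For NP-hardness, I would reduce from a known NP-complete problem. The natural candidate, given the companion reduction for boundedness above, is again the \textbf{Subset-Sum / Set-Partitioning} problem: given $a\in\Z^n$, decide whether there exists $x\in\{\pm1\}^n$ with $a^Tx=0$. The first step is to express this as an absolute value feasibility question. The condition $x\in\{\pm1\}^n$ is $|x|=e$, and together with $a^Tx=0$ we want the system
\begin{align*}
|x|=e,\quad a^Tx=0
\end{align*}
to be feasible. Now I must convert this into the pure inequality form $\tilde{A}x-\tilde{D}|x|\leq -e$ with a strict-looking right-hand side $-e$. Writing $|x|=e$ as $|x|\leq e$ and $|x|\geq e$, and $a^Tx=0$ as $a^Tx\leq0$ and $-a^Tx\leq0$, I get a system of the shape $\hat{A}x-\hat{D}|x|\leq 0$ (with some rows $x_i-1\le 0$, i.e.\ $x_i - |x_i|\cdot 0 \le 1$ — but the right-hand side here is $e$ or $0$, not $-e$). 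The key obstacle, and the part that needs care, is getting the right-hand side to be exactly $-e$ (or to reduce the ``$b=-e$'' instance to whatever $b$ I naturally produce). Since the previous proposition already shows $\mna{M}(b)\ne\emptyset$ for all $b$ iff $\mna{M}(-e)\ne\emptyset$, but that is a statement about \emph{all} $b$; for a single fixed instance I instead need a homogenization-style trick.

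The clean way around this is the homogenization trick already used in the co-NP-completeness proof: introduce an extra scalar variable $y\geq0$ and replace the constants. The system $|x|=e,\ a^Tx=0$ becomes
\begin{align*}
x\leq ey,\quad -x\leq ey,\quad a^Tx=0,\quad |x|\geq ey,\quad y\geq0,
\end{align*}
and I observe, exactly as before, that this has a solution with $y>0$ iff the original partition system is feasible, while $y=0$ forces $x=0$. The crucial improvement is that this new system is \emph{homogeneous} of degree one in $(x,y)$: every constraint is of the form (linear in $x,y$) $\le$ (linear in $x,y$, built from $|x|$, $|y|$, and linear terms), with zero constant term. Because it is homogeneous and cone-like, if it has any nonzero solution $(x^*,y^*)$ it has one with, say, $y^*\geq 1$ after scaling — or more to the point, I can add the single constraint that pushes the right-hand side to $-e$: for a homogeneous feasible system $Ex-F|\cdot|\leq0$ in variables $z=(x,y)$, nonzero feasibility is equivalent to feasibility of $Ez - F|z| \le -e$ after an appropriate scaling, provided one also ensures the trivial solution is excluded. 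Concretely I would argue: the homogeneous system has a nontrivial solution iff (adding a normalization like $e^Tz \geq 1$ or bounding to make things finite) the shifted system with right-hand side $-e$ is feasible; the standard scaling argument (multiply a nontrivial solution by a large positive constant to dominate $-e$ componentwise, which works because on the relevant cone the left-hand side can be made strictly negative) does the job. The main thing to verify carefully is that every row of the homogeneous system can be made \emph{strictly} negative simultaneously by some point of the solution cone — equivalently that the cone has nonempty interior relative to the constraints that must be strict — so that scaling reaches $\le -e$; rows like ``$a^Tx = 0$'' that cannot be made strictly negative must be handled by keeping them as two-sided and absorbing them, or by perturbing, which is the delicate bookkeeping step. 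I expect this massaging of the right-hand side from $0$ to $-e$ while preserving equivalence to be the only real obstacle; everything else is the same scaling-and-homogenization pattern as in the preceding proofs, plus the routine NP-membership check.
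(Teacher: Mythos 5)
Your NP-membership argument is fine and matches the paper's (a feasible point lies in some orthant, where it solves a linear system and hence has a polynomial-size rational representative). The hardness reduction, however, has a genuine gap that you yourself flag but do not close, and I do not believe it can be closed along the lines you propose. Every constraint of the target system $Ax-D|x|\le -e$ must hold with slack at least $1$, so only conditions that can be made \emph{simultaneously strict} at some solution survive the scaling trick. In your homogenized Set-Partitioning system, the constraints $x\le ey$, $-x\le ey$ and $|x|\ge ey$ together force $|x_i|=y$ exactly, so at every feasible $(x,y)$ and for every $i$ one of the homogeneous rows $x_i-y\le 0$ or $-x_i-y\le 0$ is tight, and no positive scaling pushes a tight homogeneous row down to $\le-1$. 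The same holds for $a^Tx=0$: split into $a^Tx\le0$ and $-a^Tx\le0$, the scaled rows would demand $a^Tx\le-1$ and $a^Tx\ge1$ simultaneously. So the solution cone has empty interior relative to the constraints --- exactly the failure mode you call ``the delicate bookkeeping step'' --- and neither absorbing the equality nor perturbing it preserves the set of YES-instances.

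The paper sidesteps this entirely by reducing from a different NP-hard problem: deciding solvability of $|Ax|\le e$, $e^T|x|>1$ for nonnegative positive definite rational matrices (cited from the literature). That system is equivalent to its fully strict version $|Ax|<e$, $e^T|x|>1$ (shrink $x$ slightly), so after introducing the homogenizing variable $y>0$ \emph{every} constraint is strict at some solution and a single scaling turns the right-hand side into $-e$. If you want to keep Set-Partitioning as the source, as in the boundedness and optimal-value propositions, you would need a genuinely new idea for encoding equalities into $\mna{M}(-e)$; the natural fix is to switch source problems as the paper does.
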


\begin{proof}
By \cite{Hla2012b,Roh2006a}, it is NP-hard to checking solvability of the system
\begin{align*}%\label{ineqEqLemmaNP}
|Ax|\leq e,\ e^T|x|>1
\end{align*}
in the set of nonnegative positive definite rational matrices. With respect to solvability, we can equivalently write
\begin{align*}
|Ax|< e,\ e^T|x|>1,
\end{align*}
or, by introducing an auxiliary scalar variable $y$,
\begin{align*}
|Ax|< ey,\ e^T|x|>y,\ y>0
\end{align*}
In view of a suitable scaling, we equivalently have
\begin{align*}
|Ax|-ey\leq-e,\ y-e^T|x|\leq -1 ,\ -y\leq-1.
\end{align*}
Eventually, we rewrite it into the canonical form of~$\mna{M}$,
\begin{align*}
\begin{pmatrix} A & -e \\ -A & -e \\ 0^T & 1 \\ 0^T & -1 \end{pmatrix}
\begin{pmatrix} x \\ y \end{pmatrix}
-\begin{pmatrix} 0 & 0 \\ 0 & 0 \\ e^T & 0 \\ 0^T & 0 \end{pmatrix}
%\begin{pmatrix} |x| \\ |y| \end{pmatrix}
\begin{vmatrix} x \\ y \end{vmatrix}
\leq -e.
%\tag*{\qedhere}
\end{align*}

Eventually, the certificate for $\mna{M}(-e)\not=\emptyset$ is a solution of $\mna{M}(-e)$. In view of the orthant decomposition, it is a solution of a system of type \eqref{avlpOrth}, so it has a polynomial size.
\end{proof}

The proof also reveals that the problem remains intractable even when $D$ has at most one nonzero row. On the other hand, in view of Observation~\ref{obsDecomp}, the complexity grows in the number of nonzero columns of~$D$. That is, providing the number of nonzero columns of $D$ is fixed, then the problem is polynomially solvable by the orthant-by-orthant decomposition approach.

%%%%%
\myparagraph{Connectedness}
As we observed, the feasible set need not be connected. So one can be interested in conditions on connectedness. 
Clearly, if $b\geq0$, then it is connected via the origin. A stronger condition follows

\begin{proposition}\label{propConnect}
The feasible set $\mna{M}$ is connected if the system of linear inequalities
\begin{align}\label{ineqPropIliConStr}
(A+D)u-(A-D)v\leq b,\ u,v\geq0
\end{align}
is solvable.
\end{proposition}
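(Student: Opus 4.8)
The plan is to connect any feasible point $x\in\mna{M}$ to a fixed feasible point lying in the nonnegative orthant, using a path that stays inside $\mna{M}$. Suppose $(u^*,v^*)$ solves \eqref{ineqPropIliConStr}. Then $z^*\coloneqq u^*-v^*$ satisfies $Az^*-D|z^*|\le (A+D)u^*-(A-D)v^* \le b$, because $|z^*|=|u^*-v^*|\le u^*+v^*$ entrywise (both $u^*,v^*\ge 0$) and $D\ge0$, so $-D|z^*|\le -D(u^*-v^*)+2Dv^*$... more directly: $Az^*-D|z^*| = A(u^*-v^*)-D|u^*-v^*| \le Au^*-Av^* - D(|u^*|-|v^*|)$ is not quite the clean bound, so instead I would argue $Az^* - D|z^*| \le Au^* - Av^* + D v^* - D(\text{something})$; the cleanest route is: since $u^*\ge0,v^*\ge0$ we have $z^{*+}\le u^*$ and $z^{*-}\le v^*$, hence $Az^* - D|z^*| = A z^{*+} - A z^{*-} - D z^{*+} - D z^{*-} = (A-D)z^{*+} - (A+D)z^{*-} \le (A-D)u^* - (A+D)v^*$? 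That has the wrong sign pattern, so I will instead use the bound $|z^*| \le u^* + v^*$ and $z^* = u^*-v^*$ directly: $Az^* - D|z^*| \ge A(u^*-v^*) - D(u^*+v^*) = (A-D)u^* - (A+D)v^*$, which is a lower bound, not what we want. The correct manipulation is $Az^* - D|z^*| \le A(u^*-v^*) + D(u^*+v^*) = (A+D)u^* - (A-D)v^* \le b$, using $-D|z^*| \le D(u^*+v^*)$ since $D\ge0$ and $|z^*|\ge -(u^*+v^*)$ trivially, but actually $-D|z^*|\le 0 \le D(u^*+v^*)$. So $z^*\in\mna{M}$.

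Next I would show that the whole segment-like path works. For an arbitrary $x\in\mna{M}$ with sign vector $s\coloneqq\sgn(x)$, I want to connect $x$ to $z^*$ within $\mna{M}$. The idea is to go through a point whose absolute value pattern we control. Consider the point $w\coloneqq \diag(s)u^* - \diag(s)v^*$ obtained by reflecting $z^*$ into the orthant of $x$; actually it is cleaner to move along a path parametrized as follows: first connect $x$ to the origin-side within its own orthant, then connect across. Inside the orthant of $s$, $\mna{M}$ coincides with the convex polyhedron \eqref{avlpOrth}, namely $(A - D\diag(s))x\le b$, $\diag(s)x\ge 0$; this is convex, so if I can exhibit one point of $\mna{M}$ in the orthant of $s$ that is connected to $z^*$ by a path in $\mna{M}$ independently of $s$, convexity lets me reach all of $\mna{M}\cap(\text{orthant }s)$. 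The natural candidate is $w_s \coloneqq \diag(s)(u^* + v^*)$ or, better, a point built from $u^*,v^*$ that lies in the orthant of $s$ and is feasible: take $w_s$ with $(w_s)_i = s_i \max(s_i u^*_i, -s_i v^*_i)$ — roughly, pick $u^*_i$ if $s_i>0$ and $-v^*_i$ if $s_i<0$. Then $|w_s|\le u^*+v^*$ coordinatewise, so the same bound as above gives $Aw_s - D|w_s| \le (A+D)u^* - (A-D)v^* \le b$, i.e. $w_s\in\mna{M}$, and $w_s$ lies in the orthant $\diag(s)x\ge0$.

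Then the path from $x$ to $z^*$ is: the segment from $x$ to $w_s$ (inside the convex polyhedron \eqref{avlpOrth}, hence in $\mna{M}$), followed by a path from $w_s$ to $z^*$. For the last leg I would flip the coordinates of $w_s$ one at a time toward $z^* = u^*-v^*$; at each single-coordinate flip, the intermediate points all satisfy $|\cdot|\le u^*+v^*$ coordinatewise (each coordinate stays in the interval $[-v^*_i,u^*_i]$ or its sign-flip), so by the same inequality they remain in $\mna{M}$, and consecutive pieces are segments within a common orthant where $\mna{M}$ is convex. Chaining these gives a connected path from any $x\in\mna{M}$ to the fixed point $z^*$, proving $\mna{M}$ is path-connected, hence connected.

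The main obstacle is getting the feasibility inequality for the auxiliary points exactly right: one must verify carefully that $|w|\le u^*+v^*$ entrywise implies $Aw - D|w|\le b$ whenever $(u^*,v^*)$ solves \eqref{ineqPropIliConStr}, and that the interpolating points along each coordinate flip also obey this bound. A secondary point requiring care is the orthant bookkeeping — ensuring that each elementary segment of the path lies in a single orthant so that Observation~\ref{obsDecomp} (convexity of $\mna{M}$ within an orthant) applies; degenerate coordinates equal to zero need to be handled so that the path does not leave $\mna{M}$ when crossing a coordinate hyperplane, which is fine since on that hyperplane the constraint $Ax - D|x|\le b$ is continuous and the bound $|w|\le u^*+v^*$ is preserved.
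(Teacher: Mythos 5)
Your opening step is fine: $z^*\coloneqq u^*-v^*$ does lie in $\mna{M}$, since $Az^*-D|z^*|\leq A(u^*-v^*)+D(u^*+v^*)=(A+D)u^*-(A-D)v^*\leq b$. The gap is exactly where you flagged it: the implication \quo{$|w|\leq u^*+v^*$ entrywise implies $Aw-D|w|\leq b$} is \emph{false}, so your auxiliary point $w_s$ need not belong to $\mna{M}$, and the same failure breaks every intermediate point of your coordinate-flip leg. Counterexample with $m=1$, $n=2$: let $A=(-10,-10)$, $D=(1,1)$, $b=-18$. Then $u^*=(1,1)^T$, $v^*=0$ solves \eqref{ineqPropIliConStr} with equality, and $\mna{M}$ meets the orthant $s=(1,-1)$ (e.g.\ $x=(3,-1)^T$ gives $-24\leq-18$). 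Your point is $w_s=(1,0)^T$, which satisfies $|w_s|\leq u^*+v^*$, yet $Aw_s-D|w_s|=-11>-18$, so $w_s\notin\mna{M}$. The reason the bound does not transfer from $z^*$ to $w_s$ is that for $z^*$ you used the exact identity $Az^*=A(u^*-v^*)$; for $w_s\neq u^*-v^*$ the term $Aw_s$ is not controlled by $(A+D)u^*-(A-D)v^*$, because $A$ has arbitrary signs.

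The fix makes the whole construction unnecessary: $z^*$ is not merely feasible, it satisfies $\tilde{A}z^*\leq b$ for \emph{every} $\tilde{A}\in[A\pm D]$, since $\tilde{A}z^*=\tilde{A}u^*-\tilde{A}v^*\leq(A+D)u^*-(A-D)v^*\leq b$ using $u^*,v^*\geq0$. Now take any $x\in\mna{M}$, set $s\coloneqq\sgn(x)$ and consider the convex polyhedron $P_s=\{y\mmid (A-D\diag(s))y\leq b\}$, i.e.\ \eqref{sysPropVert1} \emph{without} the orthant constraint. It contains $x$ (because $(A-D\diag(s))x=Ax-D|x|\leq b$), it contains $z^*$ (by the strong-solution property above), and $P_s\subseteq\mna{M}$ because $Ay-D|y|\leq(A-D\diag(s))y$ for all $y$, as already noted in the proof of Proposition~\ref{propVert}. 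Hence the straight segment from $x$ to $z^*$ lies in $\mna{M}$, with no orthant bookkeeping at all. This is in substance what the paper does, except that it obtains the conclusion by identifying $\mna{M}$ with the united solution set of the interval system via \eqref{aviCup} and citing a known connectedness criterion for interval linear inequalities.
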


\begin{proof}
In view of \eqref{aviCup}, the feasible set $\mna{M}$ can be viewed as the united solution set of an interval system of linear inequalities. The rest follows directly from \cite[Prop.~2]{Hla2014d}, which gives a sufficient condition for connectedness in the context of interval inequalities.
%(Proposition 2 in \cite{Hla2014d})
\end{proof}

%\begin{proof}
%Let $u,v$ be a solution of \eqref{ineqPropIliConStr}. By \cite{RohKre1994}, vector $x^*\coloneqq u-v$ solves the system $\tilde{A}x\leq b$ for every $\tilde{A}\in[A\pm D]$. Thus, in view of \eqref{aviCup}, every two feasible points of \eqref{avlp} are connected via $x^*$.
%\end{proof}

%%%%%%%%%%%%%%%%%%%%%%%%%%%%%%%%%%%%%%%%%%%%%%%%%%%%%%%%%%%%%%% 
%\subsection{Convexity}

%%%%%
\myparagraph{Convexity}
There are two trivial examples, where the feasible set $\mna{M}$ is convex -- the matrix $D$ is zero, or the whole feasible set lies in one orthant. Nevertheless, the set can sometimes be convex even when it intersects the interiors of at least two orthants and $D\not=0$. These situations are hard to characterize, but in essence they somehow combine the above two trivial examples.

\begin{proposition}\label{propConvNec}
Let $\mna{M}$ be convex and denote by $\mna{M}_s$ the set described by $(A-D\diag(s))x\leq b$. Let $x^1$ and $x^2$, respectively, be any vertices of $\mna{M}_{s^1}$ and $\mna{M}_{s^2}$, corresponding to bases $B^1$ and $B^2$, and such that they lie in the orthants determined by sign vectors $s^1$ and~$s^2$. If $i\in B^1\cap B^2$ and $x^1_jx^2_j<0$ for some~$j$, then $D_{ij}=0$.
\end{proposition}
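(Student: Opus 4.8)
The plan is to argue by contradiction: assume $D_{ij}>0$ and construct two points in $\mna{M}$ whose midpoint violates some constraint of $\mna{M}$, contradicting convexity. The natural candidates are the two given vertices $x^1$ and $x^2$. Since $x^1$ lies in the orthant of $s^1$ we have $|x^1|=\diag(s^1)x^1$, and similarly $|x^2|=\diag(s^2)x^2$, so both points are genuinely feasible for $\mna{M}$ (each satisfies its own orthant system). If $\mna{M}$ is convex, the midpoint $z\coloneqq\tfrac12(x^1+x^2)$ must also lie in $\mna{M}$, i.e.\ satisfy $Az-D|z|\le b$.

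The key computation is to examine the $i$-th constraint at $z$. Because $i\in B^1$, the vertex $x^1$ makes the $i$-th inequality of its orthant system tight: $(A-D\diag(s^1))_{i*}x^1=b_i$, which reads $A_{i*}x^1-D_{i*}|x^1|=b_i$. The same holds for $x^2$: $A_{i*}x^2-D_{i*}|x^2|=b_i$. Adding and halving gives $A_{i*}z-\tfrac12 D_{i*}(|x^1|+|x^2|)=b_i$. Thus feasibility of $z$ forces
$$
A_{i*}z-D_{i*}|z|\le b_i = A_{i*}z-\tfrac12 D_{i*}(|x^1|+|x^2|),
$$
i.e.\ $D_{i*}|z|\ge \tfrac12 D_{i*}(|x^1|+|x^2|)$, or equivalently $\sum_k D_{ik}\big(|z_k|-\tfrac12(|x^1_k|+|x^2_k|)\big)\ge 0$. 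But $|z_k|=\tfrac12|x^1_k+x^2_k|\le\tfrac12(|x^1_k|+|x^2_k|)$ for every $k$ by the triangle inequality, so every term is $\le 0$; hence every term with $D_{ik}>0$ must in fact be zero, meaning $|x^1_k+x^2_k|=|x^1_k|+|x^2_k|$ whenever $D_{ik}>0$. For the index $j$ we are given $x^1_jx^2_j<0$, so $|x^1_j+x^2_j|<|x^1_j|+|x^2_j|$ strictly (both components nonzero and of opposite sign), forcing $D_{ij}=0$, the desired contradiction.

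The main obstacle — really the only subtle point — is making sure the $i$-th inequalities are genuinely \emph{active} at $x^1$ and $x^2$, which is exactly what $i\in B^1\cap B^2$ encodes: a basis index of a vertex of $\mna{M}_{s}$ corresponds to a constraint satisfied with equality. One should double-check the edge case where a component $x^1_j$ or $x^2_j$ equals zero; the hypothesis $x^1_jx^2_j<0$ rules this out, so the strict triangle inequality at coordinate $j$ is safe. No appeal to interval-analytic machinery is needed here; the argument is a direct convexity-plus-triangle-inequality estimate.
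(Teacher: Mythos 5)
Your proof is correct and takes essentially the same route as the paper's: both arguments use the activity of the $i$-th constraint at the two vertices (encoded by $i\in B^1\cap B^2$), form a convex combination, and derive a contradiction with feasibility via the strict triangle inequality at coordinate $j$, the only cosmetic difference being that you fix the midpoint while the paper uses an arbitrary strict convex combination.
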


\begin{proof}
Suppose to the contrary that $D_{ij}>0$ for certain $i,j$. From the assumptions of the proposition,
\begin{align*}
(Ax^1-D|x^1|)_i=\big(Ax^1-D\diag(s^1)x^1\big)_i=b_i,\\
(Ax^2-D|x^2|)_i=\big(Ax^2-D\diag(s^2)x^2\big)_i=b_i.
\end{align*}
Thus, for any strict convex combination $x^*\coloneqq \lambda_1 x^1+\lambda_2 x^2$, where $\lambda_1,\lambda_2>0$ and $\lambda_1+\lambda_2=1$, we have
\begin{align*}
A_{i*}x^*-D_{i*}(\lambda_1|x^1|+\lambda_2|x^2|)=b_i.
\end{align*}
Since $x^1_jx^2_j<0$ and $D_{ij}>0$, we get 
\begin{equation*}
D_{ij}(\lambda_1|x^1_j|+\lambda_2|x^2_j|)
>D_{ij}|\lambda_1x^1_j+\lambda_2x^2_j|.
\end{equation*}
Therefore 
$
A_{i*}x^*-D_{i*}|x^*|>b_i
$
and $x^*$ does not belong to~$\mna{M}$; a contradiction.
\end{proof}

The condition presented in Proposition~\ref{propConvNec} is necessary for convexity of $\mna{M}$, but not sufficient. Consider, for example, the system
\begin{align*}
-2\leq x_1\leq 2,\ 
x_2\geq0,\ 
x_2\leq |x_1|,\ 
x_2\leq 0.5x_1+|x_1|.
\end{align*}
Then the condition is satisfied, see also Figure~\ref{figConvexityofSystems}, but the set characterized by this system is not convex.

\begin{figure}[t]
\centering
\includegraphics[width=0.9\textwidth]{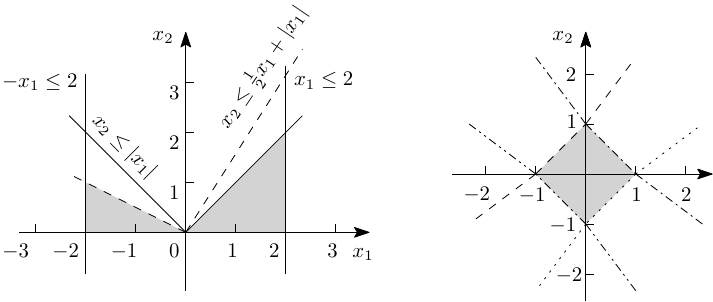}
\caption{Nonconvex $\mna{M}$ with conditions of Proposition~\ref{propConvNec} satisfied (left). Convex $\mna{M}$ intersecting more orthants (right). \label{figConvexityofSystems}}
\end{figure}

The situation where the feasible set $\mna{M}$ is convex and intersects several orthants may happen, for example, when different constraints are active in different orthants. Consider for concreteness the system
\begin{align*}
 10x_1+10x_2-|x|_1-|x|_2&\leq 9,\quad
-10x_1+10x_2-|x|_1-|x|_2\leq 9,\\
 10x_1-10x_2-|x|_1-|x|_2&\leq 9,\quad
-10x_1-10x_2-|x|_1-|x|_2\leq 9.
\end{align*}
It describes the unite ball in the Manhattan norm, see also Figure~\ref{figConvexityofSystems}, so  $\mna{M}$ is convex and intersects all orthants. The following statement formalizes this observation.

\begin{proposition}
Let $\mna{M}$ be convex and let the $i$-th inequality in $\mna{M}$ be active at points $x^1$ and $x^2$. Then for every $j\in\seznam{n}$, the condition $x^1_jx^2_j<0$ implies $D_{ij}=0$.
\end{proposition}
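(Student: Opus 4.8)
The plan is to essentially repeat the argument of Proposition~\ref{propConvNec}, but now exploiting that we have a single fixed inequality (the $i$-th one) that is active at both $x^1$ and $x^2$, rather than two vertices of two different orthant-systems sharing a basis index. First I would set $s^1\coloneqq\sgn(x^1)$ and $s^2\coloneqq\sgn(x^2)$, so that inside the respective orthants we may write $|x^1|=\diag(s^1)x^1$ and $|x^2|=\diag(s^2)x^2$. The hypothesis that the $i$-th inequality of $\mna{M}$ is active at $x^1$ means
\begin{align*}
A_{i*}x^1-D_{i*}|x^1|=b_i,\qquad A_{i*}x^2-D_{i*}|x^2|=b_i.
\end{align*}

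Next I would take an arbitrary strict convex combination $x^*\coloneqq\lambda_1 x^1+\lambda_2 x^2$ with $\lambda_1,\lambda_2>0$, $\lambda_1+\lambda_2=1$. By convexity of $\mna{M}$, the point $x^*$ is feasible, hence $A_{i*}x^*-D_{i*}|x^*|\leq b_i$. On the other hand, linearity gives $A_{i*}x^*=\lambda_1 A_{i*}x^1+\lambda_2 A_{i*}x^2$, and from the two activity equations,
\begin{align*}
A_{i*}x^*-D_{i*}\big(\lambda_1|x^1|+\lambda_2|x^2|\big)=b_i.
\end{align*}
Subtracting, feasibility of $x^*$ forces $D_{i*}|x^*|\geq D_{i*}\big(\lambda_1|x^1|+\lambda_2|x^2|\big)$. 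But by the triangle inequality applied entrywise, $|x^*_k|=|\lambda_1 x^1_k+\lambda_2 x^2_k|\leq \lambda_1|x^1_k|+\lambda_2|x^2_k|$ for every $k$, and since $D\geq0$ this yields $D_{i*}|x^*|\leq D_{i*}\big(\lambda_1|x^1|+\lambda_2|x^2|\big)$. Hence equality holds throughout, and in particular $D_{ik}\big(\lambda_1|x^1_k|+\lambda_2|x^2_k|-|x^*_k|\big)=0$ for every $k$.

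Finally I would specialize to the index $j$ with $x^1_jx^2_j<0$. Then the triangle inequality is strict: $|\lambda_1 x^1_j+\lambda_2 x^2_j|<\lambda_1|x^1_j|+\lambda_2|x^2_j|$ (strictness of the triangle inequality exactly because the two terms have opposite signs and $\lambda_1,\lambda_2>0$, so neither term is zero in the combination). Therefore $\lambda_1|x^1_j|+\lambda_2|x^2_j|-|x^*_j|>0$, and the equality $D_{ij}\big(\lambda_1|x^1_j|+\lambda_2|x^2_j|-|x^*_j|\big)=0$ compels $D_{ij}=0$, as claimed. The only mild subtlety is making the strictness of the triangle inequality airtight: one should note that $x^1_jx^2_j<0$ means both $x^1_j\neq0$ and $x^2_j\neq0$ and they have opposite signs, so $\lambda_1 x^1_j$ and $\lambda_2 x^2_j$ are nonzero reals of opposite sign, whence $|\lambda_1 x^1_j+\lambda_2 x^2_j|<|\lambda_1 x^1_j|+|\lambda_2 x^2_j|=\lambda_1|x^1_j|+\lambda_2|x^2_j|$; everything else is routine. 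This is arguably a cleaner version of Proposition~\ref{propConvNec}, so alternatively one could phrase the proof as a direct appeal to the computation already carried out there.
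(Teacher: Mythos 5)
Your proof is correct and follows essentially the same route as the paper's: activity of the $i$-th constraint at $x^1,x^2$, feasibility of the convex combination, the entrywise triangle inequality with $D\geq 0$ forcing equality, and strictness at index $j$ when $x^1_jx^2_j<0$. If anything, your version is slightly more careful than the paper's in insisting on $\lambda_1,\lambda_2>0$ and in spelling out why the triangle inequality is strict at~$j$.
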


\begin{proof}
From the assumptions of the proposition,
\begin{align*}
A_{i*}x^1-D_{i*}|x^1|=b_i,\quad
A_{i*}x^2-D_{i*}|x^2|=b_i.
\end{align*}
From convexity of $\mna{M}$, we have for any convex combination $\lambda_1 x^1+\lambda_2 x^2$, where $\lambda_1,\lambda_2\geq0$ and $\lambda_1+\lambda_2=1$,
\begin{align*}
A_{i*}(\lambda_1x^1+\lambda_2x^2)-D_{i*}|\lambda_1x^1+\lambda_2x^2|\leq b_i.
\end{align*}
Thus, we derive
\begin{align*}
\lambda_1 D_{i*}|x^1|+\lambda_2 D_{i*}|x^2|
\leq D_{i*}|\lambda_1x^1+\lambda_2x^2|.
\end{align*}
From the triangle inequality, the above holds as equation. Hence for each $j\in\seznam{n}$ we have
\begin{align*}
\lambda_1 D_{ij}|x^1_j|+\lambda_2 D_{ij}|x^2_j|
= D_{ij}|\lambda_1x^1_j+\lambda_2x^2_j|.
\end{align*}
This can happen only if $D_{ij}=0$ or $x^1_jx^2_j\geq0$.
\end{proof}

In \cite{Hla2023u} it was shown intractability of checking convexity of $\mna{M}$, for the particular case of absolute value equations.

%%%%%
\myparagraph{Complexity}
Since the absolute value equation problem $Ax+|x|=b$ is NP-hard \cite{Man2007}, it is also intractable to solve absolute value LP. In particular, it is NP-hard to check for feasibility of~\eqref{avlp}. 
%In addition, 
Further, we show that it is also hard to verify that a certain value is the optimal value.

\begin{proposition}
%It is an NP-complete problem to check if $\alpha<f^*$ for a given $\alpha\in\Z$.
It is an NP-complete problem to check if $f^*=0$.
\end{proposition}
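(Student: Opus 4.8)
The plan is to show NP-completeness of deciding whether $f^*=0$ by a reduction from a known NP-hard absolute value problem, together with a short argument that the problem lies in NP.

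\textbf{Membership in NP.} First I would argue that if $f^*=0$ is to be certified, it suffices to exhibit (i) a feasible point $x^*\in\mna{M}$ with $c^Tx^*=0$, witnessing $f^*\geq0$, and (ii) a certificate that $f^*\leq0$. For (i), by the orthant-by-orthant decomposition (Observation~\ref{obsDecomp}), any feasible point may be taken as a vertex of a system of the form \eqref{avlpOrth}, hence of polynomial bit-size. For (ii), inside each orthant $\diag(s)x\geq0$ the problem is the linear program $\max\ c^Tx$ subject to \eqref{avlpOrth}; $f^*\leq0$ holds iff each of these $2^n$ LPs has optimal value $\leq0$. A single orthant's LP has value $\leq0$ iff it is infeasible, or it is feasible and bounded above by $0$ (certified by a dual feasible solution of value $0$), or — and this is the case one must rule out or handle — it is feasible with value $>0$. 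So the nondeterministic certificate is more naturally arranged the other way: $f^*\neq0$ is in NP (guess an orthant and either an unboundedness certificate, or an LP optimum exceeding $0$, or, if one wants $f^*<0$, an infeasibility-of-$\{c^Tx\geq 0\}\cap\mna{M}$ certificate). The cleanest route is to fix the reduction so that the ``hard'' direction coincides with one of these polynomially-checkable cases; I would therefore design the instance so that $f^*\in\{0,+\infty\}$ or $f^*\in\{0,\text{positive}\}$, making $\{f^*=0\}$ the co-side of an NP predicate and hence, after also exhibiting the zero-feasible point, a problem in NP.

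\textbf{The reduction.} I would reduce from the NP-hard feasibility question already used in the paper, namely solvability of $|Ax|\le e,\ e^T|x|>1$ over nonnegative positive definite rational $A$ (from \cite{Hla2012b,Roh2006a}), equivalently $\mna{M}(-e)\neq\emptyset$ as in the previous proposition. Given such an instance, let $\mna{M}'$ be the corresponding absolute value feasible set built in that proof (feasibility of $Ax-D|x|\le -e$), and note it is a cone-like set closed under positive scaling in the relevant variables — more precisely, from that construction, whenever $(x,y)$ is feasible so is $(\alpha x,\alpha y)$ for $\alpha\ge1$, because the constraints have the homogeneous-in-$(x,y)$ left-hand side $\le -e$ and scaling up only makes the left-hand side more negative. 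Now append a free objective-carrying variable: consider the absolute value LP
\begin{align*}
\max\ t \st Ax-D|x|\le -e,\ \ t\le x_1,\ \ t\le -x_1,\ \ t\ge0,
\end{align*}
or, if a cleaner homogeneous form is wanted, replace the right-hand side by $0$ and add a normalization. If the original instance is a YES instance, pick a feasible $x$; then $\alpha x$ is feasible for all large $\alpha$, so $t$ can be made arbitrarily large and $f^*=+\infty>0$. If it is a NO instance, $\mna{M}'=\emptyset$, so the whole feasible set is empty and $f^*$ is $-\infty$; to make the target value exactly $0$ in the NO case one instead always keeps the origin feasible (right-hand side $0$, constraints homogeneous) so that $f^*\ge0$ trivially, and arranges that a positive objective value forces a nontrivial point of the homogeneous cone associated with the hard instance, which exists iff the instance is YES. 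Thus $f^*=0$ iff the instance is NO, giving NP-hardness of the complement, i.e.\ co-NP-hardness; combined with the membership discussion this pins down the complexity. I would then double-check which of ``$f^*=0$'' versus ``$f^*\neq0$'' the paper actually wants in NP and orient the homogenized reduction accordingly — this choice of orientation is the only delicate modeling point.

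\textbf{Main obstacle.} The crux is not the hardness gadget (scaling/homogenization of an existing NP-hard AVE-type instance) but the \emph{exact} value $0$: I must engineer the construction so the optimal value is literally $0$ in one case and strictly positive (or $+\infty$) in the other, with the origin or some explicit rational point always feasible to anchor $f^*\ge0$, and with a positive objective value provably equivalent to a nontrivial solution of a homogeneous absolute value system that encodes the NP-hard problem. Handling unboundedness (so that ``$f^*=0$'' is not accidentally confused with ``$f^*=+\infty$'' on the feasibility side) and confirming that the zero-feasible witness together with the LP-duality certificates per orthant really give an NP (or co-NP) membership proof are the steps I expect to require the most care.
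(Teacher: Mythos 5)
There is a genuine gap: your reduction is oriented the wrong way. You map YES instances of the NP-hard feasibility problem to instances with $f^*>0$ (or $f^*=+\infty$) and NO instances to $f^*=0$, and you explicitly conclude ``$f^*=0$ iff the instance is NO, giving \dots co-NP-hardness.'' That establishes co-NP-hardness of the question ``$f^*=0$?'', which is not what the proposition asserts; NP-hardness requires YES instances of the source problem to map to YES instances of ``$f^*=0$?''. You flag this orientation issue yourself but never resolve it, and with your homogeneous-cone gadget it cannot be resolved by a simple flip: in the NO case, certifying $f^*=0$ would amount to certifying the \emph{nonexistence} of a nontrivial solution of the hard system, which also ruins NP membership. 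Relatedly, your membership discussion drifts into showing that $f^*\neq0$ is in NP, i.e.\ that $f^*=0$ is in co-NP, which again is the wrong side.

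The paper avoids both problems with one trick you are missing: it reduces from Set-Partitioning ($\exists x\in\{\pm1\}^n$ with $a^Tx=0$) via the program $\max\ a^Tx$ subject to $|x|=e$, $a^Tx\leq0$. Because $a^Tx\leq0$ is placed \emph{among the constraints}, the upper bound $f^*\leq0$ holds structurally for every instance produced by the reduction, so the only thing that needs certifying is $f^*\geq0$, and the certificate is precisely a solution of Set-Partitioning (a polynomial-size witness, lying in some orthant as a solution of a system of type \eqref{avlpOrth}). Thus $f^*=0$ exactly when the Set-Partitioning instance is a YES instance, giving NP-hardness with the correct orientation and NP membership simultaneously. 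If you want to salvage your approach, you would need to build the bound $f^*\leq0$ into the constraints in the same way and arrange that the NP witness of the source problem directly exhibits a feasible point of objective value $0$.
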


\begin{proof}
We again utilize a reduction from the Set-Partitioning problem: 
\begin{center}
Given $a\in\mathbb{Z}^n$, exists $x\in\{\pm1\}^n: a^Tx=0$?
\end{center}
We formulate it as
$$
|x|=e,\ a^Tx=0.
$$
Consider the absolute value LP problem
\begin{align*}
\max\ a^Tx \st |x|=e,\ a^Tx\leq0.
\end{align*}
Its optimal value is zero if and only if the Set-Partitioning problem is feasible. Therefore it is NP-hard to check if $f^*=0$. The certificate for $f^*=0$ is any solution of the Set-Partitioning problem, which proves NP-completeness.
\end{proof}

%%%%%
\myparagraph{Duality}
The interval linear programming viewpoint \eqref{ilpOf} allows us to introduce a certain kind of duality in absolute value LP (cf.\ duality in interval LP \cite{NovHla2020a,Ser2005,Roh1980}). Let 
\begin{align*}
g(\tilde{A})=\min\ b^Ty \st \tilde{A}^Ty=c,\ y\geq0
\end{align*}
be the dual problem to \eqref{ilp}. Based on weak duality in LP and \eqref{ilpOf} we get weak duality for \eqref{avlp}
\begin{align*}
f^* \leq \max\ g(\tilde{A}) \st \tilde{A}\in[A\pm D].
\end{align*}
Strong duality can be derived under a certain assumption. Basically, we need to ensure strong duality in the LP instances, that is, $f(\tilde{A})=g(\tilde{A})$ for each $\tilde{A}\in[A\pm D]$. Checking this property is known to be co-NP-hard \cite{NovHla2020a}, but there are cheap sufficient conditions. One of them is feasibility of \eqref{ilp} for each $\tilde{A}\in[A\pm D]$, which was addressed in the proof of Proposition~\ref{propConnect}.

\begin{proposition}
If \eqref{ineqPropIliConStr} is feasible, then
\begin{align*}
f^* = \max\ g(\tilde{A}) \st \tilde{A}\in[A\pm D].
\end{align*}
\end{proposition}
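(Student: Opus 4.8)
The plan is to combine the weak-duality inequality that was just established with the strong-duality results of interval linear programming. Recall that we already have, unconditionally,
\begin{align*}
f^* \leq \max\ g(\tilde{A}) \st \tilde{A}\in[A\pm D],
\end{align*}
so it remains to prove the reverse inequality under the hypothesis that \eqref{ineqPropIliConStr} is feasible. The natural route is to show that this hypothesis forces $f(\tilde{A}) = g(\tilde{A})$ for every $\tilde{A}\in[A\pm D]$, i.e.\ strong LP duality holds simultaneously in every instance of the interval family; then taking the maximum over $\tilde{A}$ of the left-hand equality and using \eqref{ilpOf} gives $f^* = \max_{\tilde{A}} f(\tilde{A}) = \max_{\tilde{A}} g(\tilde{A})$, as desired.

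First I would verify that feasibility of \eqref{ineqPropIliConStr} implies the primal LP \eqref{ilp} is feasible for every $\tilde{A}\in[A\pm D]$. This is exactly what was used inside the proof of Proposition~\ref{propConnect}: if $u,v\geq0$ satisfy $(A+D)u-(A-D)v\leq b$, then setting $x\coloneqq u-v$ one checks, using $D\geq0$, that $\tilde{A}x \leq (A+D)u - (A-D)v \leq b$ for any $\tilde{A}\in[A\pm D]$, because $\tilde{A}x = \tilde{A}u - \tilde{A}v \leq (A+D)u - (A-D)v$ entrywise. Hence each LP \eqref{ilp} has a nonempty feasible set.

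Second, I would invoke the standard fact from LP duality theory: if a linear program $\max\{c^Tx : \tilde{A}x\leq b\}$ is feasible and its dual $\min\{b^Ty : \tilde{A}^Ty = c,\ y\geq0\}$ is feasible, then both have optimal solutions with equal values. Feasibility of the dual $g(\tilde{A})$, however, does not depend on $\tilde{A}$ through $b$ but through $\tilde{A}^T$; we need $c \in \{\tilde{A}^Ty : y\geq0\} = \cone(\tilde{A}^T_{*1},\dots)$. This is the delicate point — primal feasibility of all instances alone does not by itself guarantee dual feasibility, so one must argue that whenever $f(\tilde{A})$ is finite the dual is feasible (no duality gap and no primal unboundedness), or else reference the precise statement from interval LP that feasibility of \eqref{ineqPropIliConStr} is a sufficient condition for $f(\tilde{A})=g(\tilde{A})$ for all $\tilde{A}\in[A\pm D]$, as mentioned in the text citing \cite{NovHla2020a}. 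The cleanest write-up cites that known sufficient condition directly; the honest subtlety is handling the case where some $f(\tilde{A}) = +\infty$, in which case primal unboundedness means the corresponding dual is infeasible and $g(\tilde{A})=+\infty$ by convention, and then $\max_{\tilde A} g(\tilde A) = +\infty = \max_{\tilde A} f(\tilde A) = f^*$ trivially — so the equality still holds, just with both sides infinite. I expect the main obstacle to be stating this boundary case cleanly rather than any deep argument: once primal feasibility of the whole family is secured, the per-instance strong duality of LP does all the work, and the supremum interchange is immediate from \eqref{ilpOf}.

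\begin{proof}
By weak duality for \eqref{avlp}, derived above, we have $f^* \leq \max\ g(\tilde{A}) \st \tilde{A}\in[A\pm D]$. We prove the reverse inequality. Assume $u,v\geq0$ satisfy \eqref{ineqPropIliConStr}, and let $\tilde{A}\in[A\pm D]$ be arbitrary. Put $x\coloneqq u-v$. Since $|\tilde{A}-A|\leq D$ and $u,v\geq0$, we have entrywise $\tilde{A}u \leq (A+D)u$ and $-\tilde{A}v\leq -(A-D)v$, hence
\begin{align*}
\tilde{A}x = \tilde{A}u-\tilde{A}v \leq (A+D)u-(A-D)v\leq b,
\end{align*}
so the LP \eqref{ilp} is feasible for every $\tilde{A}\in[A\pm D]$. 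This is precisely the sufficient condition ensuring $f(\tilde{A})=g(\tilde{A})$ for each $\tilde{A}\in[A\pm D]$ (with the convention that both values equal $+\infty$ when \eqref{ilp} is unbounded, in which case its dual is infeasible); see \cite{NovHla2020a}. Therefore, using \eqref{ilpOf},
\begin{align*}
f^* = \max_{\tilde{A}\in[A\pm D]} f(\tilde{A}) = \max_{\tilde{A}\in[A\pm D]} g(\tilde{A}),
\end{align*}
which, combined with weak duality, yields the claim.
\end{proof}
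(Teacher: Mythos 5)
Your proof is correct and follows the same route the paper intends: the paper states this proposition without a formal proof, relying on the preceding discussion that feasibility of \eqref{ineqPropIliConStr} yields a common feasible point $x=u-v$ for every $\tilde{A}\in[A\pm D]$, which is the cited sufficient condition for $f(\tilde{A})=g(\tilde{A})$ throughout the interval family. Your write-up simply makes that chain explicit, including the correct handling of the unbounded case, so it matches the paper's argument.
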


Under the same assumption, we have a finite but exponential reduction formula (cf.\ \cite{ChinRam2000,Hla2012a,Mra1998,NovHla2020a,Roh1984})
\begin{align*}
f^* 
&= \max_{s\in\{\pm1\}^n}\ \min\ b^Ty \st (A-D\diag(s))^Ty=c,\ y\geq0,\\
&= \max_{s\in\{\pm1\}^n}\ \max\ c^Tx \st (A-D\diag(s))x\leq b.
\end{align*}

%%%%%%%%%%%%%%%%%%%%%%%%%%%%%%%%%%%%%%%%%%%%%%%%%%%%%%%%%%%%%%% 
\section{Further geometric properties}\label{sNonconvPoly}
%%%%%%%%%%%%%%%%%%%%%%%%%%%%%%%%%%%%%%%%%%%%%%%%%%%%%%%%%%%%%%% 

By Observation~\ref{obsDecomp} we know that the system
\begin{align}\label{avls}
Ax-D|x|\leq b
\end{align}
describes a polyhedral set that is convex within each orthant. A natural question is whether it holds the other way round as well: Can every polyhedral set that is convex inside each orthant be described by a system \eqref{avls} without an increase of dimension (additional variables)? The answer is negative.

\begin{example}\label{exCupOrtImposs}
Consider the set
\begin{align*}
M = \{x\in\R^2\mmid x_1\leq-1,\ x_2\geq 1\}
 \cup \{x\in\R^2\mmid -x_1+x_2\leq0,\ x_2\geq 1\},
\end{align*}
which is depicted in Figure~\ref{figExCupOrtImposs}. If this set could be formulated as \eqref{avls}, then there should be an absolute value inequality
\begin{align*}
\alpha|x_1|+\beta|x_2|+\gamma x_1+\delta x_2 \leq 0
\end{align*}
such that it reduces to inequality $-x_1+x_2\leq0$ in the nonnegative orthant. In the nonnegative orthant, the absolute value inequality reads $(\alpha+\gamma)x_1+(\beta+\delta)x_2\leq0$, whence
\begin{align*}
\alpha+\gamma=-1,\ \ \beta+\delta=1.
\end{align*}
Thus the absolute value inequality takes the form
\begin{align*}
\alpha|x_1|+\beta|x_2|+(-1-\alpha)x_1+(1-\beta) x_2 \leq 0.
\end{align*}
In the orthant associated with the sign vector $(-1,1)$, this system reads
\begin{align*}
x_2\leq (1+2\alpha)x_1.
\end{align*}
There is no $\alpha$ such that this inequality is satisfied for every point $x\in M$; note that the problematic points are those on the half-line parallel to~$x_2$. Thus, there is no simple way to express this system using~\eqref{avls}.
\begin{figure}[t]
\centering
\includegraphics[width=0.5\textwidth]{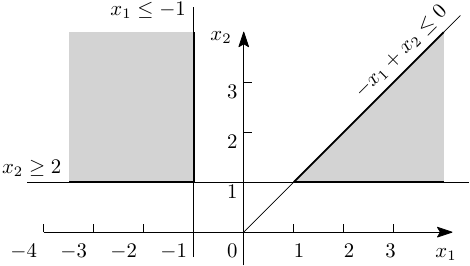}
\caption{(Example~\ref{exCupOrtImposs}) Visualization of the disconnected polyhedral set~$M$. 
\label{figExCupOrtImposs}}
%Polyhedral set in the nonnegative orthant using additional variable $x_3 = -2$.(right). {\mh[Tomu obrázku napravo moc nerozumím]}
\end{figure}

Nevertheless, we can still reformulate $M$ by means of absolute value inequalities, but on account of increasing the number of variables. With help of an additional variable $x_3$, we describe $M$ by the system 
\begin{align*}
 x_1+x_3-|x_3|\leq-1,\ -x_1+x_2-x_3-|x_3|\leq0,\ x_2\geq 1.
\end{align*}
The case $x_3\geq0$ characterizes the left part of $M$ (as depicted in  Figure~\ref{figExCupOrtImposs}), and the case $x_3\leq0$ characterizes the right part of~$M$.
\end{example}

This example indicates that the problematic polyhedral sets are those that are unbounded and there is an unbounded direction perpendicular to an axis. Indeed, avoiding such cases, we can prove the property to be true.

\begin{theorem}\label{thmPolOrthAsAvls}
Let $M\subseteq\R^n$ be a polyhedral set and convex in each orthant. Suppose there is no  unbounded direction in the boundary of $M$ that is orthogonal to an axis. Then $M$ can be described by means of~\eqref{avls}.
\end{theorem}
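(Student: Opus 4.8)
The plan is to reconstruct the system \eqref{avls} orthant by orthant, using the orthant decomposition of Observation~\ref{obsDecomp} as a template for what the final description must look like. For each sign vector $s\in\{\pm1\}^n$, the set $M$ restricted to the orthant $\diag(s)x\geq0$ is a convex polyhedron, hence describable by finitely many linear inequalities; collect all of these across all $2^n$ orthants. The task is to convert each such inequality, say $a^Tx\leq\beta$ valid on orthant $s$, into an absolute value inequality that is (i) equivalent to $a^Tx\leq\beta$ inside orthant $s$, and (ii) satisfied by all of $M$ in every other orthant (so that it does not cut anything off). The natural candidate is to replace $x_j$ by $s_j|x_j|$ wherever the coefficient interacts badly with other orthants, i.e.\ to write
\begin{align*}
\sum_{j}\big(\tfrac{a_j}{2}(1+?)\,x_j \ \text{or}\ \tfrac{a_j}{2}s_j|x_j|\big)\leq\beta,
\end{align*}
more cleanly: for the inequality $a^Tx\leq\beta$ valid on orthant $s$, consider the absolute value inequality obtained by the substitution $x_j\mapsto s_j|x_j|$ for each $j$, namely $\sum_j a_j s_j |x_j|\leq\beta$ — but that is typically too weak or wrong on other orthants, so instead one keeps a linear part and pushes the rest into $|x_j|$ via identities like $x_j = \tfrac12(x_j+s_j|x_j|) + \tfrac12(x_j-s_j|x_j|)$, choosing the split so that on orthant $s$ the absolute value term vanishes and off orthant $s$ the term has a definite sign that makes the constraint loose.

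Concretely, I would proceed as follows. First, fix an orthant $s$ and a facet inequality $a^Tx\le\beta$ of $M\cap\{\diag(s)x\ge0\}$. For each coordinate $j$, write $a_j x_j = \tfrac{a_j}{2}(x_j + s_j|x_j|) + \tfrac{a_j}{2}(x_j - s_j|x_j|)$. The term $x_j - s_j|x_j|$ is zero on orthant $s$ and equals $2x_j$ (with sign $-s_j$) off it. Summing, $a^Tx = \tfrac12(a^Tx + \sum_j a_j s_j|x_j|) + \tfrac12(a^Tx - \sum_j a_j s_j|x_j|)$; the proposed constraint is to keep the combination whose absolute value part is inactive on orthant $s$ and, by the no-axis-orthogonal-unbounded-direction hypothesis, bounded (hence harmless after adding a slack to $\beta$ or scaling) on the other orthants. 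The second step is to verify, for a point $y\in M$ lying in some other orthant $s'$, that the chosen absolute value inequality holds at $y$: here is where boundedness in the "flipped" directions is used — a direction in which $M$ is unbounded cannot be orthogonal to an axis, so along any recession direction of $M$ every coordinate $y_j$ grows, and the extra terms $\sum_j a_j(s'_j - s_j)|y_j|$ stay controlled relative to the linear growth, so the inequality cannot be violated. The third step is to take the conjunction over all orthants $s$ and all facet inequalities; this yields a finite system of absolute value inequalities whose solution set, inside each orthant $s$, reduces exactly to the linear description of $M\cap\{\diag(s)x\ge0\}$, hence equals $M$. Since $M$ is convex within each orthant, no cross-orthant convexity obstruction (as in Proposition~\ref{propConvNec}) arises.

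The main obstacle — and the step that needs genuine care rather than bookkeeping — is step two: showing that each constructed absolute value inequality, tailored to be tight on its home orthant $s$, does not accidentally exclude part of $M$ sitting in a different orthant $s'$. This is exactly the phenomenon that made Example~\ref{exCupOrtImposs} fail, and it is precisely where the hypothesis on unbounded directions must be invoked. I expect the clean way to handle it is to reduce to the bounded case first: write $M = P + R$ where $R$ is the recession cone, argue (using the hypothesis) that $R$ is either trivial or lies in the interior of a single orthant together with its axes in a benign way, handle the polytope part $P$ by a compactness/scaling argument (choosing the slack constants in the $\beta$'s uniformly large enough, which is legitimate since we only need \eqref{avls} to \emph{describe} $M$, not to do so with prescribed data), and then check that the recession directions are respected because along them the linear part $a^Tx$ already certifies membership. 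Making the interplay between the recession cone and the coordinate hyperplanes precise is the crux; everything else is the substitution identity above plus a finite intersection.
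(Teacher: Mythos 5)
Your overall strategy is the paper's: go facet by facet and orthant by orthant, attach to each inequality $a^Tx\le\beta$ valid on the orthant of sign $s$ a correction built from the expressions $s_jx_j-|x_j|$ (which vanish on the home orthant and have a definite sign elsewhere), and invoke the hypothesis on unbounded directions exactly to control recession rays of $M$ in the other orthants. So the skeleton is right. But the concrete construction you commit to does not work, and the place where it fails is precisely the step you yourself flag as the crux. Your splitting $a_jx_j=\tfrac{a_j}{2}(x_j+s_j|x_j|)+\tfrac{a_j}{2}(x_j-s_j|x_j|)$ ties the coefficient of the vanishing term to $a_j/2$. Off the home orthant, $\tfrac{a_j}{2}(x_j-s_j|x_j|)=a_jx_j$ when $\sgn(x_j)=-s_j$, whose sign is $-\sgn(a_j)s_j$ and hence is not controlled; the "kept" inequality $\tfrac12\bigl(a^Tx+\sum_ja_js_j|x_j|\bigr)\le\beta$ reads $\sum_{j:\,s_j=s'_j}a_jx_j\le\beta$ on the orthant of sign $s'$, which points of $M$ there have no reason to satisfy (for the facet $-x_1+x_2\le0$ of Example~\ref{exCupOrtImposs} it collapses to $x_2\le0$ on the orthant $(-1,1)$ and cuts off everything). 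Your proposed repair, "adding a slack to $\beta$ or scaling," is not available: any slack in $\beta$ loosens the inequality on the home orthant as well and destroys the exact description there.

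The missing idea is that the correction term must carry a \emph{free} large parameter and a forced sign, decoupled from $a$: the paper uses $(\alpha e+a)^Tx-\alpha e^T|x|\le\beta$ for the nonnegative orthant, i.e.\ in general $a^Tx+\alpha\sum_j(s_jx_j-|x_j|)\le\beta$ with $\alpha>0$ large. Each summand $s_jx_j-|x_j|$ is $\le0$ everywhere and $=0$ on the home orthant, so the constraint is unchanged where it must be exact and only gets \emph{looser} as $\alpha$ grows elsewhere. One then picks $\alpha$ large enough to accommodate the finitely many vertices of $M$ in the other orthants (where $\sum_{j\in I}|x_j|>0$ with $I=\{j:\,s'_j\ne s_j\}$, the term is strictly negative and eventually dominates $a^Tx-\beta$), and the only remaining danger is an extreme ray $y^*$ with $a^Ty^*>0$ on which the correction does not grow, i.e.\ $y^*_j=0$ for all $j\in I$ --- exactly the axis-orthogonal unbounded boundary direction excluded by hypothesis. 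Your recession-cone discussion gestures at this but in the wrong direction: you do not need to decompose $M=P+R$ or to place $R$ inside a single orthant; you only need that no extreme ray of the relevant piece has a vanishing coordinate. With the $\alpha$-parametrized inequality in place, the rest of your plan (conjunction over all orthants and facets, exact reduction inside each orthant) goes through as you describe.
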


\begin{proof}
The set $M$ is characterized by a union of convex polyhedra described by linear inequalities. Consider any inequality $a^Tx\leq b$ from the description of~$M$. Without loss of generality assume that this inequality characterizes a convex part of $M$ in the nonnegative orthant. Consider now the absolute value inequality
\begin{align*}
 (\alpha e+a)^Tx -\alpha e^T|x| \leq b,
\end{align*}
where $\alpha>0$ is large enough. Let us focus on an arbitrary but fixed orthant associated with a sign vector $e\not=s\in\{\pm1\}^n$; the corresponding orthant is characterized by the inequality $\diag(s)x\geq0$. Within this orthant, the absolute value inequality takes the form of
\begin{align*}
 2\alpha \sum_{i\in I}x_i \leq b-a^Tx,
\end{align*}
where $I=\{i\mmid s_i=-1\}$. We claim that any feasible point $x\in M$, $\diag(s)x\geq0$, satisfies this inequality. It is sufficient to prove it for vertices and extremal directions only. If $\sum_{i\in I}x_i=0$, then point $x$ lies on the border of the nonnegative orthant and the inequality $0 \leq b-a^Tx$ obviously holds. Thus we can assume that $\sum_{i\in I}x_i<0$. Since 
\begin{align*}
 -2\alpha \sum_{i\in I}|x_i| = 2\alpha \sum_{i\in I}x_i \leq b-a^Tx
\end{align*}
and $\alpha>0$ is large enough, the inequality is satisfied for any vertex. Hence it remains to inspect the extremal directions. In order that the inequality is violated, there must be an unbounded edge $x^*+\lambda y^*$, $\lambda\geq0$, such that $a^Ty^*>0$ and $y_i^*=0$ for every $i\in I$. However, this means that the edge is orthogonal to the axes $x_i$, $i\in I$; a contradiction.
\end{proof}

The technique described in the proof provides an efficient representation of $M$ by absolute value inequalities. If $M$ is characterized by $m$ facets, then the resulting system \eqref{avls} consists of $m$ inequalities. 
Figure~\ref{figConvexify} illustrates the application of Theorem~\ref{thmPolOrthAsAvls}.

\begin{figure}[t]
\centering
\includegraphics[width=0.5\textwidth]{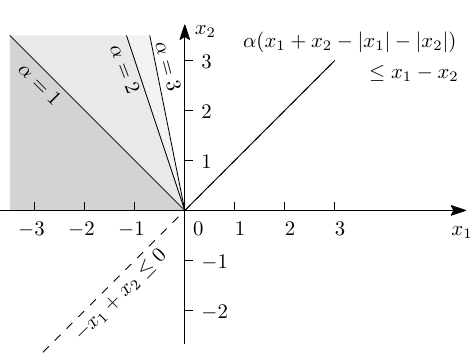}
\caption{Representation of inequality $-x_1+x_2 \leq 0$ from Example~\ref{exCupOrtImposs} in the orthant defined by $s=(-1,1)$ using the techniques from Theorem~\ref{thmPolOrthAsAvls}.\label{figConvexify}}
\end{figure}

\begin{corollary}
Let $M\subseteq\R^n$ be a polyhedral set and convex in each orthant. Suppose that in the orthant associated with sign $s\in\{\pm1\}^n$ it is characterized by $m_s$ linear inequalities. Under the assumption of Theorem~\ref{thmPolOrthAsAvls}, the set $M$ can be described by the system \eqref{avls} with $\sum_{s\in\{\pm1\}^n}m_s$ inequalities.
\end{corollary}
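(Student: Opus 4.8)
The plan is to read this off the proof of Theorem~\ref{thmPolOrthAsAvls} while keeping track of how many absolute value inequalities it produces; the content is a bookkeeping refinement of that theorem (the remark after it already says the resulting system has one inequality per facet, and here we merely split the facet count orthant by orthant). By Observation~\ref{obsDecomp}, $M$ is the union over $s\in\{\pm1\}^n$ of the convex polyhedra $M_s\coloneqq M\cap\{x\mmid\diag(s)x\geq0\}$, and by hypothesis $M_s$ is described inside its orthant by $m_s$ linear inequalities. First I would fix an orthant $s$ and one of the defining inequalities $a^Tx\leq b$ of $M_s$, and attach to it the absolute value inequality
\begin{align*}
 (a+\alpha\diag(s)e)^Tx-\alpha e^T|x|\leq b,
\end{align*}
with $\alpha>0$ large; this is the construction of Theorem~\ref{thmPolOrthAsAvls} transported from the nonnegative orthant to the orthant~$s$ via $x\mapsto\diag(s)x$, a bijection that leaves $|x|$ unchanged. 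The immediate thing to verify is the cancellation: inside $\diag(s)x\geq0$ one has $|x|=\diag(s)x$, the $\alpha$-terms drop out, and the inequality becomes exactly $a^Tx\leq b$, so the $m_s$ attached inequalities reproduce the linear description of $M_s$ in that orthant.

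Next I would check that in every other orthant $\diag(t)x\geq0$ with $t\neq s$ the same absolute value inequality is satisfied by the whole of $M$. Writing $J\coloneqq\{i\mmid s_i\neq t_i\}\neq\emptyset$, it becomes $a^Tx-2\alpha\sum_{i\in J}|x_i|\leq b$, and here I would reuse verbatim the argument of Theorem~\ref{thmPolOrthAsAvls}: it suffices to test vertices (where a large enough $\alpha$ wins, since $a^Tx-b$ is bounded above on each bounded face and the degenerate case $\sum_{i\in J}|x_i|=0$ puts the vertex on the boundary shared with orthant~$s$, where $a^Tx\leq b$ already holds) and extremal directions (where a violating unbounded edge would have to be orthogonal to the axes $x_i$, $i\in J$, contradicting the no-axis-orthogonal-unbounded-direction hypothesis). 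Since there are finitely many orthants and finitely many inequalities, taking $\alpha$ above the finitely many thresholds makes one value of $\alpha$ serve the entire construction.

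Finally I would assemble the system: over all $s\in\{\pm1\}^n$, collect the $m_s$ absolute value inequalities built above, for a total of $\sum_{s\in\{\pm1\}^n}m_s$, and argue that their conjunction cuts out precisely $M$. If $x\in M$, put $s_0\coloneqq\sgn(x)$, so $x\in M_{s_0}$; the block attached to $s_0$ reduces at $x$ to the linear description of $M_{s_0}\ni x$ and thus holds, while every block attached to an $s\neq s_0$ holds on all of $M$ by the previous step. Conversely, a point $x$ satisfying all blocks satisfies in particular the one attached to $\sgn(x)$, which at $x$ is exactly the $m_{\sgn(x)}$ linear inequalities defining $M_{\sgn(x)}$, so $x\in M_{\sgn(x)}\subseteq M$. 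I expect the only delicate point to be, exactly as in Theorem~\ref{thmPolOrthAsAvls}, the interplay between a uniform choice of $\alpha$ and the hypothesis on unbounded boundary directions; the rest is substitution and counting.
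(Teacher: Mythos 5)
Your proof is correct and follows exactly the route the paper intends: the corollary is stated without its own proof as a direct consequence of the construction in Theorem~\ref{thmPolOrthAsAvls}, namely one absolute value inequality $(a+\alpha\diag(s)e)^Tx-\alpha e^T|x|\leq b$ per defining inequality per orthant, giving $\sum_{s}m_s$ inequalities in total. Your transport of the theorem's ``without loss of generality, the nonnegative orthant'' step to a general orthant $s$, and the uniform choice of $\alpha$ over the finitely many thresholds, are precisely the details the paper leaves implicit.
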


Example~\ref{exCupOrtImposs} indicated that we can overcome the orthogonality assumption of Theorem~\ref{thmPolOrthAsAvls} but on account of additional variables. Indeed, only $n$ additional variables are needed to rewrite any polyhedral set that is convex within any orthant in the form of~\eqref{avls}. In fact, we present a stronger result on reformulation of an arbitrary polyhedral set.
%Theorem~\ref{thmPolOrthAsAvls} considered polyhedral sets that are convex within the particular orthants. Now, we extend the result to a general polyhedral set. The possible drawback of the generality is that the number of additional variables can sometimes overestimate.

\begin{theorem}\label{thmPolGenAsAvls}
Let $M\subseteq\R^n$ be a polyhedral set described
\begin{align}\label{setThmPolGenAsAvls}
M=\bigcup_{i=1}^m\{x\in\R^n\mmid A^ix\leq b^i\}.
\end{align}
It can be characterized as the absolute value system~\eqref{avls} with at most $\log(m)$ additional variables.
\end{theorem}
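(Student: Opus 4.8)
The paper wants to show: any polyhedral set $M = \bigcup_{i=1}^m \{x : A^i x \le b^i\}$ can be written in the form $\tilde A z - \tilde D|z| \le \tilde b$ using only about $\log m$ extra variables. Let me think about how this should go.

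The key idea is a "disjunction via binary encoding" trick. With $k = \lceil \log_2 m \rceil$ extra variables $t_1, \dots, t_k$, I want to use the sign pattern of $(t_1, \dots, t_k)$ to select which of the $m$ systems is active. Each of the $2^k \ge m$ sign vectors $\sigma \in \{\pm 1\}^k$ picks out one of the pieces (pieces may be repeated if $2^k > m$, so pad the list). Within the orthant of $t$-space corresponding to sign vector $\sigma$, I want the constraints to reduce to the system $A^{i(\sigma)}x \le b^{i(\sigma)}$, where $i(\sigma)$ is the index assigned to $\sigma$.

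So concretely: I recall from the motivation section that $\min(a,b) = \tfrac12(a+b-|a-b|)$, and more generally the "active-constraint-selection" mechanism. Actually the cleanest route: for each original inequality $A^i_{j*}x \le b^i_j$ (row $j$ of block $i$), I want to relax it to something that is automatically satisfied unless the $t$-orthant is exactly $\sigma$ with $i(\sigma)=i$. The standard gadget is to add a term like $+ N(\text{something nonnegative that vanishes precisely on that orthant}) $ to the right-hand side, where $N$ is a huge constant — a big-$M$ construction. The quantity "$\sum_{\ell : \sigma_\ell = +1} t_\ell^+ + \sum_{\ell : \sigma_\ell = -1} t_\ell^-$" is a nonnegative piecewise-linear function of $t$ that is zero exactly on the closed orthant of sign $\sigma$ (well, on its interior it's zero only there; need to be careful at boundaries), and it is expressible with absolute values since $t_\ell^+ = \tfrac12(t_\ell + |t_\ell|)$ and $t_\ell^- = \tfrac12(|t_\ell| - t_\ell)$. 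Multiplying by a large enough constant and adding to each RHS makes the inequality vacuous off that orthant. Conversely one must confirm that on the orthant of $\sigma$ the gadget term is $0$ so the original system is recovered, and that the extra variables $t$ can always be chosen (pick $t = \sigma$, or $t=0$ if one is careful) — so projecting onto $x$ gives exactly $M$. One also needs boundedness of $M$ or at least of the relevant polyhedra to make "large enough $N$" meaningful — or handle unbounded directions by a separate argument on extremal rays as in Theorem \ref{thmPolOrthAsAvls}; I'd mimic that vertices-and-extreme-directions reduction.

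The main obstacle I anticipate is twofold: first, the "big-$M$" constant must dominate $b^i - A^i x$ over all relevant $x$, which is fine for bounded $M$ but for unbounded $M$ one has to argue on extreme rays that the selector term grows at least as fast, exactly the orthogonality-type subtlety that appeared in Theorem \ref{thmPolOrthAsAvls} and Example \ref{exCupOrtImposs}; here the extra $t$-variables give enough freedom to avoid that obstruction. Second, the boundary-of-orthant case: a point with some $t_\ell = 0$ lies in several orthants simultaneously, so the gadget doesn't cleanly isolate one piece — but this only makes the feasible set \emph{larger} (a union over all orthants touching that $t$), and since we still get $\supseteq$ each piece and $\subseteq M$ holds because every feasible $(x,t)$ lies in \emph{some} closed orthant whose relaxed system forces $x$ into the corresponding piece (closed pieces, so boundary overlaps are harmless). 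I'd write the equivalence as the two inclusions, the easy one by exhibiting $t=\sigma$, the hard one by the orthant/big-$M$ argument combined with the extreme-ray check borrowed from the previous proof.
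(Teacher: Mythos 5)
Your architecture --- binary encoding of the $m$ pieces by the sign pattern of $k=\lceil\log_2 m\rceil$ auxiliary variables, plus a nonnegative piecewise-linear ``selector'' that vanishes exactly on the chosen orthant of $t$-space --- is exactly the paper's. Indeed, your selector (after fixing a sign slip: as written, $\sum_{\ell:\sigma_\ell=+1}t_\ell^{+}+\sum_{\ell:\sigma_\ell=-1}t_\ell^{-}$ vanishes on the orthant of sign $-\sigma$, not $\sigma$) equals $\tfrac12\sum_\ell(|t_\ell|-\sigma_\ell t_\ell)$, which is precisely the negative of the term $\sum_j(s^i_j z_j-|z_j|)$ that the paper adds to every row of the $i$-th block in~\eqref{sysPfThmPolGenAsAvls}. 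Your treatment of the reverse inclusion (every feasible $(x,t)$ lies in some closed orthant, whose selector is zero, so the corresponding block forces $x$ into that piece) also matches the paper's argument via $s^i\coloneqq\sgn(z)$.

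The genuine gap is in how you deactivate the non-selected blocks when $M$ is unbounded. You frame this as a big-$M$ problem --- multiply the selector by a constant $N$ that dominates $A^{i'}x-b^{i'}$ over all relevant $x$ --- and, recognizing that no uniform $N$ exists for unbounded pieces, you propose to ``argue on extreme rays as in Theorem~\ref{thmPolOrthAsAvls}.'' That is the wrong fix: the extreme-ray argument of Theorem~\ref{thmPolOrthAsAvls} works only under its orthogonality hypothesis, and Example~\ref{exCupOrtImposs} shows it genuinely fails without that hypothesis; importing it here would defeat the purpose of the theorem. The missing idea is that no constant $N$ is needed at all, because the \emph{magnitude} of the auxiliary variables is itself the big-$M$ and may be chosen per point: for $x$ in the piece with code $\sigma$, set $t=\alpha\sigma$ with $\alpha$ depending on $x$. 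Every other block's code differs from $\sigma$ in at least one coordinate, each mismatch contributing $2\alpha$ to the selector, so the inactive block $i'$ is relaxed by at least $2\alpha$; since $\max_\ell(A^{i'}x-b^{i'})_\ell$ is finite for the \emph{fixed} point $x$, taking $\alpha$ larger than this maximum over all $i'$ satisfies all inactive blocks. This per-point scaling is the entire content of the forward inclusion in the paper's proof, and without it your argument does not go through for unbounded~$M$.
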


\begin{proof}
\begin{comment}
We have $x\in M$ if and only if
\begin{align*}
\min_{i=1,\dots,k}\ \max_{j=1,\dots, m_i}\, (A^ix-b^i)_j \leq 0,
\end{align*}
where $m_i$ is the number of inequalities in $A^ix\leq b^i$. By a substitution and introduction of additional variables $y_1,\dots,y_k$, we express it as
\begin{align*}
\min_{i=1,\dots,k}\, y_i\leq 0,\ \ 
% (A^ix-b^i)_j \leq y_i,\ i=1,\dots,k,\ j=1,\dots,m_i.
 A^ix-b^i \leq y_i e,\ i=1,\dots,k.
\end{align*}
The first constraint can be reformulated as follows. We will inductively use the well-known formula 
$$
\min\{y_1,y_2\}=\frac{1}{2}(y_1+y_2-|y_1-y_2|).
$$
\end{comment}
Suppose first that $m=2^k$ for some natural~$k$. Each convex polyhedral set $\{x\mmid A^ix\leq b^i\}$ can be uniquely associated with a vector $s^i\in\{\pm1\}^k$ by a suitable bijection; notice that $s^i$ is not interpreted as the sign vector of some feasible solutions now. We claim that $M$ is characterized by the system
\begin{align}\label{sysPfThmPolGenAsAvls}
A^ix + \sum_{j=1}^k (s^i_jz_j-|z_j|)e \leq b^i,\quad i=1,\dots,2^k.
\end{align}
If $x\in M$, then the point $x$ satisfies a particular system $A^kx\leq b^k$ for some particular~$k$. We simply put $z\coloneqq \alpha s^k$, where $\alpha>0$ is sufficiently large. Then $(x,z)$ satisfies system~\eqref{sysPfThmPolGenAsAvls}; we need $\alpha>0$ large enough in order that  $(x,z)$ satisfies~\eqref{sysPfThmPolGenAsAvls} for $i\not=k$.

Conversely, let  $(x,z)$ satisfy~\eqref{sysPfThmPolGenAsAvls} and let $s^i\coloneqq\sgn(z)$ be the sign vector of~$z$. Then $x$ lies in the the convex polyhedral set associated with~$s^i$ since 
\begin{align*}
A^ix = A^ix + \sum_{j=1}^k (s^i_jz_j-|z_j|)e \leq b^i.
%\tag*{\qedhere}
\end{align*}

If $m$ is not a power of $2$, we proceed analogously; we just omit some of the terms in the above summation in~\eqref{sysPfThmPolGenAsAvls}. For example, if $m=3$, then the corresponding system reads
\begin{align*}
 A^1x + z_1-|z_1| &\leq b^1,\\
 A^2x - z_1-|z_1| + z_2-|z_2| &\leq b^2,\\
 A^3x - z_1-|z_1| - z_2-|z_2| &\leq b^3.
%\tag*{\qedhere}
\end{align*}
\end{proof}

%The result is not only theoretical, but in certain cases it also enables for a more efficient reformulation than the standard approaches. 

\begin{example}
Consider the union of the convex polyhedral sets described as in \eqref{setThmPolGenAsAvls}. 
%\begin{align*}
%M=\bigcup_{i=1}^k\{x\in\R^n\mmid A^ix\leq b^i\}.
%\end{align*}
The set can be reformulated by means of integer linear programming using the constraints and additional continuous and binary variables
\begin{align*}
A^ix^i\leq b^i,\ \ 
x=\sum_{i=1}^m z_ix^i,\ \ 
%\sum_{i=1}^m z_i=1,\ \
e^Tz=1,\ \  
z\in\{0,1\}^m.
\end{align*}
The direct way to express it as an absolute value linear system produces
\begin{align*}
A^ix^i\leq b^i,\ \ 
x=\sum_{i=1}^m z_ix^i,\ \ 
e^Tz=1,\ \  
|2z-e|=e.
\end{align*}
This system employs $m(n+1)$ variables, and the reformulation to the canonical form \eqref{avls} increases the number by~$m$ more. In contrast, the technique from Theorem~\ref{thmPolGenAsAvls} requires merely $\log(m)$ new variables.
\end{example}

%%%%%%%%%%%%%%%%%%%%%%%%%%%%%%%%%%%%%%%%%%%%%%%%%%%%%%%%%%%%%%% 
\section{Integrality}\label{sIntegr}
%%%%%%%%%%%%%%%%%%%%%%%%%%%%%%%%%%%%%%%%%%%%%%%%%%%%%%%%%%%%%%% 

This section aims to make a link with integer programming. In particular, we are interested in integrality of the vertices of the feasible set~$\mna{M}$. In the theory of integer linear programming, integrality of vertices is related to unimodular and totally unimodular  matrices~\cite{Schr1998}. 
Recall that a matrix $A\in\Z^{m\times n}$ is unimodular if each basis (nonsingular submatrix of order~$m$) has the determinant $+1$ or $-1$~\cite[Sect.~21.4]{Schr1998}. 
%{\color{blue} A matrix $A$ is totally unimodular if each subdeterminant of $A$ is $-1$, $0$, or $+1$~\cite{Schr1998}}. 
Such matrices naturally appear in the context of absolute value programs, too. 

Throughout this section, we assume that $A,D\in\Z^{m\times n}$. By a vertex of $\mna{M}$, we mean a vertex of \eqref{sysPropVert2} for some $s\in\{\pm1\}^n$, but the following characterization is also valid when we employ \eqref{sysPropVert1} instead.

\begin{proposition}\label{propUnimodExp}
The vertices of $\mna{M}$ are integral for every $b\in\Z^m$ if and only if matrix $(A-D\diag(s))^T$ is unimodular for each $s\in\{\pm1\}^n$.
\end{proposition}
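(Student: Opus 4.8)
The plan is to reduce the statement to the classical characterization of integrality of vertices of a polyhedron $\{x \mmid Cx \leq d\}$ in terms of unimodularity of $C^T$, applied orthant by orthant. Recall (see \cite[Sect.~21.4]{Schr1998}) that for a full-row-rank integer matrix, all basic solutions of $\{x \mmid Cx \leq d\}$ are integral for every integer $d$ if and only if every basis of $C$ (nonsingular submatrix of order equal to the number of rows, after possibly transposing so the relevant dimension matches) has determinant $\pm1$; in the present setting the vertices of \eqref{sysPropVert2} for a fixed $s$ are determined by choosing $n$ active constraints from the system $(A-D\diag(s))x \leq b$, $\diag(s)x \geq 0$, and the relevant square submatrices are $n \times n$ submatrices of $\begin{pmatrix} A - D\diag(s) \\ \diag(s) \end{pmatrix}$.

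First I would prove the ``if'' direction. Suppose $(A-D\diag(s))^T$ is unimodular for each $s$. Fix $s \in \{\pm1\}^n$ and let $x^*$ be a vertex of \eqref{sysPropVert2}. Then $x^*$ is the unique solution of a square subsystem obtained by selecting $n$ linearly independent active rows; each such row is either a row of $A - D\diag(s)$ or a row $\pm e_i^T$ coming from $\diag(s)x \geq 0$. The key observation is that including a row $\pm e_i^T$ amounts to fixing $x^*_i = 0$ and deleting the $i$-th column from the remaining rows. After eliminating all such coordinates, $x^*$ (restricted to its free coordinates) solves a square system whose matrix is a submatrix of $A - D\diag(s)$ of the appropriate order, obtained by deleting some rows and the columns corresponding to the zero coordinates. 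Unimodularity of $(A - D\diag(s))^T$ means every such maximal nonsingular submatrix has determinant $\pm1$; by Cramer's rule with an integral right-hand side $b$, the free coordinates of $x^*$ are integral, and the fixed ones are $0$. Hence $x^* \in \Z^n$.

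For the ``only if'' direction, I would argue by contraposition. Suppose for some $s$ the matrix $(A-D\diag(s))^T$ is not unimodular, i.e.\ some order-$k$ nonsingular submatrix $C'$ of $A - D\diag(s)$ (for some $k \leq n$; the worst case $k = n$ is the clean one) has $|\det C'| \geq 2$. Choosing the remaining $n - k$ coordinates to be pinned to $0$ via the $\diag(s)x \geq 0$ constraints, I would construct a right-hand side $b \in \Z^m$ for which the induced vertex of \eqref{sysPropVert2} is the solution of $C' x' = (\text{integer vector})$ with $x'$ having a non-integral coordinate --- standard in the totally-unimodular/unimodular theory via Cramer's rule, picking the integer vector to be a suitable unit vector so that the corresponding component of the inverse, which has denominator $\det C'$, is not an integer. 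One must also arrange that this point is actually a vertex (all other constraints slack) and that it lies in the correct orthant $\diag(s)x \geq 0$: since the non-pinned coordinates can be scaled by a positive factor without leaving the orthant's interior, and the pinned coordinates are $0$, this is achievable, possibly after a sign adjustment; the remaining constraints are made slack by choosing their right-hand sides large.

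The main obstacle I anticipate is the bookkeeping in the ``only if'' direction: one must simultaneously (i) realize a bad $n \times n$ (or lower-order) determinant as the defining matrix of an actual vertex of \eqref{sysPropVert2}, (ii) keep that candidate point inside the prescribed orthant, and (iii) ensure genuine vertex status (nondegeneracy or at least uniqueness). Handling submatrices of order $k < n$ --- where some coordinates are forced to zero by the orthant-boundary constraints --- requires care, since then the nonsingular block lives among the columns indexed by the free coordinates; but this is exactly parallel to how \eqref{sysPropVert2} reduces to \eqref{sysPropVert1} on the supporting subspace, and the remark in the statement that \eqref{sysPropVert1} may be used instead suggests treating both uniformly. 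A minor point worth a sentence is that if $A - D\diag(s)$ has fewer than $n$ rows, or is rank-deficient, \eqref{sysPropVert2} may have no vertices for that $s$, in which case that orthant imposes no condition and is vacuously consistent with unimodularity of the (empty or degenerate) matrix.
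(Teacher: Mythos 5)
Your overall strategy (orthant decomposition plus the classical unimodularity criterion and Cramer's rule) is the same as the paper's, but both directions of your plan have a concrete defect.

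In the ``if'' direction you work with vertices of \eqref{sysPropVert2} and, after eliminating the coordinates pinned to zero by active orthant constraints, you invoke unimodularity of $(A-D\diag(s))^T$ to conclude that the reduced $k\times k$ system matrix has determinant $\pm1$. Unimodularity only controls nonsingular submatrices of \emph{order $n$} (all $n$ columns of $A-D\diag(s)$ present); it says nothing about minors obtained by also deleting columns. For instance, with $D=0$ and $A=\bigl(\begin{smallmatrix}2&1\\1&1\end{smallmatrix}\bigr)$ the matrix $A^T$ is unimodular ($\det A=1$), yet $(\tfrac12,0)$ is a vertex of the orthant-restricted system $Ax\le(1,\beta)^T$, $x\ge0$, cut out by the row $(2,1)$ together with an orthant row, and it is not integral. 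The paper avoids this trap by invoking Proposition~\ref{propVert}: a vertex of $\conv\mna{M}$ is already a vertex of \eqref{sysPropVert1} alone, so all $n$ active rows come from $A-D\diag(s)$ and the basis really is an order-$n$ submatrix to which the hypothesis applies. You need that reduction; the mixed active sets you allow are genuinely not covered by unimodularity of $(A-D\diag(s))^T$.

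In the ``only if'' direction the crux is to turn a bad basis into an actual non-integral vertex lying in the prescribed orthant, and your proposed mechanism fails: scaling $C'^{-1}e_i$ by a positive factor preserves the signs of its coordinates, so it cannot move a point with a negative entry into the orthant $\diag(s)x\ge0$, while a ``sign adjustment'' changes the orthant and hence replaces $A-D\diag(s)$ by a different matrix, so the offending determinant need not survive. The missing idea is \emph{translation}: reducing WLOG to $s=e$, the paper takes a non-integral column $(A-D)_B^{-1}e_i$ of the inverse of a bad basis $B$ and sets $x^*=(A-D)_B^{-1}e_i+\alpha e$ with $\alpha\in\Z$ large, which keeps $x^*$ non-integral, places it in the nonnegative orthant, and satisfies $(A-D)_Bx^*=b_B$ for the integral right-hand side $b_B=e_i+\alpha(A-D)_Be$; the remaining components of $b$ are chosen large so that all other constraints are slack. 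Note also that non-unimodularity hands you a bad submatrix of order $n$ directly, so the $k<n$ bookkeeping you anticipate is not needed in this direction.
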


\begin{proof}
\quo{If.} 
Let $b\in\Z^m$ and let $x^*$ be a vertex of~$\mna{M}$. By Proposition~\ref{propVert}, $x^*$ is a vertex of the convex polyhedron described by 
$%\begin{align*}
(A-D\diag(s^*))x\leq b,
$ %\end{align*}
$\diag(s^*)x\geq0,$ 
where $s^*\coloneqq\sgn(x^*)$. Due to unimodularity of $(A-D\diag(s^*))^T$, vertex $x^*$ is integral.

\quo{Only if.} 
We know that matrix $(A-D)^T$ is integral, and without loss of generality we assume that $(A-D)^T$ is not unimodular. Thus, there is a basis $B$ such that $(A-D)_B^{-1}$ is not integral. Suppose some non-integral elements are in the $i$th column of $(A-D)_B^{-1}$ and define $x^*\coloneqq (A-D)_B^{-1} e_i + \alpha e$, where $0<\alpha\in\Z$ is sufficiently large. Then $x^*$ is nonintegral and nonnegative (i.e., it lies in the correct orthant). Define $b\in\Z^m$ such that $b_B\coloneqq e_i+\alpha (A-D)_B e$ and $b_N\coloneqq \beta e$, where $0<\beta\in\Z$ is sufficiently large. Now, $(A-D)_B x^*=(A-D)_B((A-D)_B^{-1} e_i + \alpha e)=b_B$ and $(A-D)_N x^* \leq b_N$. Therefore, $x^*$ is a vertex of $(A-D)x\leq b$, and because it lies in the correct orthant, it is also a vertex of~$\mna{M}$. As shown above, it is non-integral.
\end{proof}

Due to row linearity of the determinant we have that if the condition of Proposition~\ref{propUnimodExp} is satisfied, then matrix $(A-D\diag(s))^T$ is unimodular for each $s\in\{\pm1,0\}^n$. In particular, $A^T$ is unimodular.

\begin{corollary}
If matrix $(A-D\diag(s))^T$ is unimodular for each $s\in\{\pm1\}^n$, then it is unimodular for each $s\in\{\pm1,0\}^n$.
\end{corollary}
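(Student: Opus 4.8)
The plan is to derive this directly from Proposition~\ref{propUnimodExp} together with the multilinearity (row-linearity) of the determinant, exactly as the paragraph preceding the statement suggests. So I would first fix an arbitrary sign vector $s\in\{\pm1,0\}^n$ and consider any basis $B$ of $(A-D\diag(s))^T$, i.e.\ any choice of $m$ columns of $A-D\diag(s)$ forming a nonsingular $m\times m$ matrix; the goal is to show $\det$ of this submatrix is $\pm1$ (or, if it is singular, there is nothing to prove since unimodularity is a condition only on nonsingular submatrices of order $m$).

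The key observation is that the $j$-th column of $A-D\diag(s)$ equals $A_{*j}-s_jD_{*j}$, which depends affinely (in fact linearly) on the scalar $s_j$. Fixing all columns but one, the determinant of the chosen $m\times m$ submatrix is an affine function of $s_j$, hence in particular it is a convex combination of its values at $s_j=+1$ and $s_j=-1$ when $s_j=0$; more precisely, the value at $s_j=0$ is the average $\tfrac12(\text{value at }s_j=1)+\tfrac12(\text{value at }s_j=-1)$. Iterating over all coordinates $j$ with $s_j=0$, I would conclude that the determinant of the submatrix of $(A-D\diag(s))^T$ indexed by $B$ is an average (a uniform convex combination over $2^k$ sign patterns, where $k=|\{j:s_j=0\}|$) of determinants of the corresponding submatrices of $(A-D\diag(s'))^T$ over all $s'\in\{\pm1\}^n$ agreeing with $s$ on its nonzero coordinates.

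Now the finishing step: each of those determinants lies in $\{-1,0,1\}$ by hypothesis (the nonsingular ones are $\pm1$ by unimodularity, the singular ones are $0$). A uniform average of integers from $\{-1,0,1\}$ that is itself a nonzero integer must equal $+1$ or $-1$ — indeed the only way the average of values in $[-1,1]$ can have absolute value $1$ is if all of them equal that common value $\pm1$. Hence the chosen submatrix of $(A-D\diag(s))^T$ has determinant $\pm1$, and since $B$ was arbitrary, $(A-D\diag(s))^T$ is unimodular. Taking $s$ with all coordinates nonzero recovers the hypothesis, and $s=0$ gives unimodularity of $A^T$.

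I do not anticipate a real obstacle here; the statement is essentially a corollary of the cited multilinearity remark. The only point requiring a little care is the bookkeeping for the expansion over several zero coordinates simultaneously — one should phrase it as a single induction on $k=|\{j:s_j=0\}|$, the base case $k=0$ being the hypothesis, and the inductive step using row-linearity in one zero coordinate at a time — and being explicit that unimodularity constrains only the nonsingular order-$m$ submatrices, so singular submatrices (contributing $0$ to the average) cause no trouble.
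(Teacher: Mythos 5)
Your proof is correct and follows essentially the same route as the paper's: both exploit linearity of the determinant in the columns $A_{*j}-s_jD_{*j}$ to express the determinant at a zero coordinate as the average of its values at $s_j=\pm1$, and both invoke integrality of the determinant to rule out the half-integer values that would otherwise arise. The paper organizes this as a coordinate-by-coordinate iteration with an explicit case analysis, whereas you expand all $k$ zero coordinates at once into a uniform average over $2^k$ sign completions, but the underlying argument is identical.
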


\begin{proof}
Let $s^{i+}$, $s^{i-}$ and $s^{i0}$ denote the vector $s$, where we additionally modify $s(i) = +1$, $s(i) = -1$ and $s(i) = 0$, respectively. 
%Notice that each row of the matrix $(A-D\diag(s))^T$ can be considered as an affine function of $s$.
Let $A^{i+} \coloneqq (A - D\diag(s^{i+}))^T$, $A^{i-} \coloneqq (A - D\diag(s^{i-}))^T$ and $A^{i0} \coloneqq (A - D\diag(s^{i0}))^T$. Due to linearity of determinant,  we have $2\det(A^{i0}) = \det(A^{i+}) + \det(A^{i-})$. For $\det(A^{i-}) = \det(A^{i+}) \in \{\pm 1, 0\}$ we get $\det(A^{i0}) \in \{\pm 1,0\}$. For $\det(A^{i-}) = -\det(A^{i+})$ we get $\det(A^{i0}) = 0$. In the case when $|\det(A^{i-}) -\det(A^{i+})| = 1$ we have $\det(A^{i0}) \in \{\pm\frac{1}{2}\}$. However, this is not possible since the matrix $\det(A^{i0})$ is integral and thus its determinant should be integral as well. Using this approach iteratively, we obtain that the matrix $(A-D\diag(s))^T$ is unimodular for each $s\in \{\pm 1, 0\}$. 
\end{proof}

It is known~\cite{Schr1998} that unimodularity of a matrix is a polynomially decidable problem. In our case of the absolute value problem, the characterization of Proposition~\ref{propUnimodExp} is exponential. It is an open problem what is the real complexity of the problem -- is it polynomial or NP-hard? Anyway, for the selection of the sign vector $s$, we cannot reduce the set $\{\pm1\}^n$ to a fixed subset of polynomial size.

\begin{proposition}
There is no subset $\mna{S}\subseteq\{\pm1\}^n$ of size at most $2^{n-1}-1$ such that the condition of Proposition~\ref{propUnimodExp} can be reduced to $s\in\mna{S}$.
\end{proposition}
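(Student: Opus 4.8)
The plan is to exhibit, for each proper subset $\mna{S}\subseteq\{\pm1\}^n$ with $|\mna{S}|\le 2^{n-1}-1$, a concrete pair of matrices $A,D\in\Z^{m\times n}$ such that $(A-D\diag(s))^T$ is unimodular for every $s\in\mna{S}$, yet there exists a sign vector $t\in\{\pm1\}^n\setminus\mna{S}$ for which $(A-D\diag(t))^T$ is \emph{not} unimodular. This would show the reduction to $\mna{S}$ gives a false positive, so no such $\mna{S}$ can be sufficient. The quantitative slack is that there are $2^n$ sign vectors, and if $|\mna{S}|\le 2^{n-1}-1$ then $\{\pm1\}^n\setminus\mna{S}$ still contains more than half of all sign vectors; the construction should be flexible enough that \emph{some} missing $t$ can always be made the ``bad'' one.

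First I would set up a construction that lets us prescribe, for a single chosen sign vector $t$, the matrix $A-D\diag(t)$ to be a small fixed non-unimodular matrix (e.g.\ the $1\times1$ matrix $(2)$, or a $2\times2$ block with determinant $2$), while for \emph{every other} sign vector $s$ the matrix $A-D\diag(s)$ collapses to something trivially unimodular. A natural device: take $m$ large, pick $A$ and $D$ so that $A-D\diag(s)$ is essentially $A$ with a few sign-dependent entries, and arrange that only the specific coincidence $s=t$ produces the non-unimodular submatrix. Since we are free to choose $t\notin\mna{S}$ \emph{after} seeing $\mna{S}$, and such $t$ always exists by the counting bound, this yields the claim. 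A cleaner route may be to build one ``universal'' instance in which the non-unimodularity is witnessed simultaneously along many sign vectors at once — e.g.\ choosing $A,D$ so that for \emph{every} $s$ in some large symmetric family $\mna{T}$ (of size exceeding $2^{n-1}-1$) the matrix $(A-D\diag(s))^T$ fails unimodularity, while it stays unimodular off $\mna{T}$. Then any $\mna{S}$ of size $\le 2^{n-1}-1$ misses some $t\in\mna{T}$, and the reduction wrongly certifies integrality.

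The key steps, in order: (i) fix the counting observation $|\{\pm1\}^n\setminus\mna{S}|\ge 2^{n-1}+1>|\mna{S}|$; (ii) design a parametrized family $A=A(t)$, $D=D(t)$ (or a single instance with a large bad set $\mna{T}$) in which a prescribed sign vector forces a determinant of absolute value $\ge2$ in some order-$m$ submatrix of $(A-D\diag(t))^T$; (iii) verify that for the remaining sign vectors the corresponding matrix is unimodular — ideally by making $A-D\diag(s)$ have an obviously unimodular (e.g.\ triangular with $\pm1$ diagonal, or identity-like) structure for $s\ne t$; (iv) conclude that $\mna{S}$ cannot suffice. One should also double-check the dimensions: $(A-D\diag(s))^T$ unimodular requires $m$ vs.\ $n$ compatible with the determinant test (``each nonsingular submatrix of order~$m$''), so the construction must keep $m\le n$ or adapt the definition accordingly; this is a bookkeeping point, not a real obstacle.

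The main obstacle I anticipate is step (iii): ensuring the instance is ``quiet'' for \emph{all} other sign vectors simultaneously, i.e.\ that flipping away from the prescribed pattern genuinely restores unimodularity rather than accidentally creating a different non-unimodular submatrix. Controlling all $2^n-|\mna{S}|$ residual matrices at once is delicate; the likely fix is to make the dependence on $s$ extremely localized — only one or two entries of $A-D\diag(s)$ actually move with $s$ — so that the determinant computation in the bad submatrix is a transparent linear function of the offending sign entries, vanishing to a $\pm1$ value except in the single prescribed case. Getting this localization compatible with the requirement $D\ge0$ (so that the moving entries have a fixed sign of variation) is the one place where the argument needs care.
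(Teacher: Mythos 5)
Your high-level plan --- count that some sign vector outside $\mna{S}$ must exist, then exhibit an instance that passes the reduced test on $\mna{S}$ yet fails unimodularity at a missing sign vector --- is the right shape and matches the paper's strategy. But the proposal has a genuine gap: the construction in your steps (ii)--(iii) \emph{is} the proof, and you do not supply it; you only describe what it should accomplish and explicitly flag step (iii) (keeping all $2^n-|\mna{S}|$ remaining matrices unimodular simultaneously) as an unresolved obstacle. In addition, your fallback ``universal instance'' route is logically broken: if a single instance has bad set $\mna{T}$ with $|\mna{T}|>|\mna{S}|$, the reduced test is fooled only when $\mna{S}\cap\mna{T}=\emptyset$, not merely when $\mna{S}$ misses \emph{some} element of $\mna{T}$ (if $\mna{S}$ contains even one bad vector, the reduced test correctly reports failure). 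Since an adversary may choose $\mna{S}\subseteq\mna{T}$ whenever $\mna{T}\neq\emptyset$ and $n\geq2$, no single instance can defeat all candidate sets; the instance must be tailored to $\mna{S}$, which is exactly the part left open.

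For comparison, the paper resolves the difficulty without isolating a single bad vertex: since $|\{\pm1\}^n\setminus\mna{S}|\geq 2^{n-1}+1$ exceeds the maximum independent set of the hypercube, the complement contains two \emph{adjacent} sign vectors, which after relabeling are $s_1=e$ and $s_2=e-2e_1$; taking $A=-I_n$ and $D=I_n-e_1e_1^T$ gives $A-D\diag(s)=\diag(-1,-1-s_2,\dots,-1-s_n)$, which is non-unimodular precisely for $s\in\{s_1,s_2\}$ and singular (hence vacuously unimodular, having no basis) for every other $s$. The ``quietness'' you worry about is thus obtained for free from diagonality and singularity, not from delicate determinant control. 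Incidentally, your stronger single-bad-vertex goal is attainable: $A=-\diag(t)$, $D=I_n$ makes $A-D\diag(s)=-\diag(t+s)$ singular for every $s\neq t$ and equal to $\mp2 I_n$ up to signs at $s=t$, which would even improve the bound from $2^{n-1}-1$ to $2^n-1$. But nothing in the proposal establishes such a construction, so as submitted the proof is incomplete.
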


\begin{proof}
Suppose to the contrary that such a set $\mna{S}$ exists for some~$n$. We first show that there are two vectors $s_1,s_2\in\{\pm1\}^n\setminus\mna{S}$ such that $|s_1-s_2|=2e_i$ for certain $i\in\seznam{n}$. To see it, observe that the set $\{\pm1\}^n$ consists of all vertices of the $n$-dimensional hypercube. It is known that the size of a maximum independent vertex set of the hypercube is $2^{n-1}$ (and the maximum independent vertex set consists of either those vectors that have odd number of minus ones, or those with even number). Since the cardinality of the set $\{\pm1\}^n\setminus\mna{S}$ is at least $2^{n-1}+1$, there must be inside two vertices connected by an edge, and these two vertices $s_1$ and $s_2$ have the required form.

%Let
Without loss of generality suppose that $s_1=e$, $s_2=e-2e_1$ and let
\begin{align*}
A=-I_n,\quad 
D=I_n-e_1e_1^T.
%\begin{pmatrix}\end{pmatrix}
\end{align*}
For any $s\in\mna{S}$, the matrix $(A-D\diag(s))^T$ is unimodular; in fact, it is unimodular for each $s\not\in\{s_1,s_2\}$. However, it is not unimodular for $s\in\{s_1,s_2\}$, so the reduction to $\mna{S}$ is not sufficient.
\end{proof}

Regarding the complexity, one polynomially decidable subclass is that where $D$ has fixed rank. We first discuss the case with $D$ having rank one, and then we extend it to an arbitrary fixed rank. In the following, $\|s\|_0$ denotes the $0$-norm of $s$, that is, the number of nonzero entries in~$s$.

\begin{proposition}\label{propUnimodRankOne}
Let $D$ have rank one. Then the characterization of Proposition~\ref{propUnimodExp} is satisfied if and only if matrix $(A-D\diag(s))^T$ is unimodular for each $s\in\{\pm1,0\}^n$ such that $\|s\|_0\leq 2$.
\end{proposition}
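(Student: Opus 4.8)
The plan is to exploit the rank-one structure of $D$ to reduce the exponential family of matrices $(A-D\diag(s))^T$, $s\in\{\pm1\}^n$, to a controlled list. Write $D=pq^T$ with $p\in\R^m$, $q\in\R^n$ (after clearing denominators we may take $p,q$ integral, though for the determinant argument rationality suffices). Then $D\diag(s)=p(q\circ s)^T$, where $\circ$ is the entrywise product, so for any fixed $s$ the matrix $A-D\diag(s)=A-p(q\circ s)^T$ is a rank-one perturbation of $A$. For any basis (index set $B$ of size $m$ picking $m$ columns of $(A-D\diag(s))^T$, equivalently $m$ rows... actually $m$ columns of the $n\times m$ matrix, i.e.\ $m$ indices from $\seznam{n}$), the matrix determinant lemma gives
\begin{align*}
\det\big((A-p(q\circ s)^T)_B^T\big)
=\det(A_B^T)-(q\circ s)_B^T\,\adj(A_B^T)\,p
=\det(A_B^T)-\sum_{j\in B} s_j q_j\,\big(\adj(A_B^T)\,p\big)_j.
\end{align*}
The key observation is that this is an \emph{affine} function of the sign pattern $s$ restricted to the coordinates in $B$, with each $s_j$ entering linearly. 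So as $s$ ranges over $\{\pm1\}^n$ the value of this determinant depends only on $(s_j)_{j\in B}$, and, more to the point, the set of attained values is the set of values of an affine function on the discrete cube $\{\pm1\}^B$.

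The heart of the argument is then the following combinatorial claim: if an integer-valued affine function $\varphi(t)=c_0+\sum_{j\in B}c_j t_j$ takes a value outside $\{-1,0,1\}$ at some $t\in\{\pm1\}^B$, then it already takes a value outside $\{-1,0,1\}$ at some $t'\in\{\pm1\}^B$ that differs from the all-ones (or all-minus-ones) reference point in at most \emph{one} coordinate — equivalently, after translating, there is a point of weight $\le 2$ witnessing non-unimodularity. I would prove this by a flipping/monotonicity argument: start at a violating $t$, and flip the coordinates $j$ one at a time; each flip changes $\varphi$ by $\pm 2c_j$, and by choosing at each stage the flip that does not decrease $|\varphi|$ (or more carefully, by a sign-alignment argument picking $t_j=\sgn(c_j)$ to maximize $|\varphi|$ away from its ``base'' value, then walking back), one reaches a nearly-canonical point still violating $\{-1,0,1\}$ while having changed few signs from a canonical sign vector. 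Concretely: the maximum of $|\varphi|$ over the cube is attained at $t^\ast$ with $t^\ast_j=\sgn(c_j)$; if $\varphi(t^\ast)\notin\{-1,0,1\}$ we are already done after relabeling; and if every weight-$\le 2$ point gives a value in $\{-1,0,1\}$, one shows inductively that $\varphi\equiv$ (something degenerate) on the whole cube, forcing all large-support points into $\{-1,0,1\}$ as well. Translating back, non-unimodularity of $(A-D\diag(s))^T$ for \emph{some} $s\in\{\pm1\}^n$ forces non-unimodularity already for some $s\in\{\pm1,0\}^n$ with $\|s\|_0\le 2$, with the zeros accounting for the coordinates never touched.

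For the two directions of the stated equivalence: the ``only if'' direction is immediate from the earlier corollary, which already tells us that the full condition (unimodularity for all $s\in\{\pm1\}^n$) implies unimodularity for all $s\in\{\pm1,0\}^n$, in particular for those with $\|s\|_0\le2$. The ``if'' direction is the content of the contrapositive argument above: assuming some $s\in\{\pm1\}^n$ yields a non-unimodular $(A-D\diag(s))^T$, pick a witnessing basis $B$, apply the affine-on-the-cube reduction to produce a sparse sign vector $s'$ with $\|s'\|_0\le2$ and a (possibly the same) basis on which $(A-D\diag(s'))^T$ is non-unimodular. I expect the main obstacle to be making the ``flip to a sparse witness'' step fully rigorous — in particular handling the bookkeeping between the chosen basis $B$, the coordinates outside $B$ (which enter the determinant not at all, so can be freely set to $0$), and the degenerate cases where some $c_j$ vanish or where $\det(A_B^T)$ itself is already $0$ versus $\pm1$. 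Everything else (the matrix determinant lemma computation, integrality, and invoking Proposition~\ref{propUnimodExp} and its corollary) is routine.
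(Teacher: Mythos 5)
Your overall strategy is essentially the paper's: exploit the rank-one structure to express the determinant of each candidate basis as a function of $s$ that is affine in every coordinate $s_j$, and then argue that its values on sign vectors with $\|s\|_0\le 2$ already control its values on all of $\{\pm1\}^n$, after which Proposition~\ref{propUnimodExp} finishes the ``if'' direction and the corollary gives the ``only if'' direction. Your use of the adjugate form $\det(M+uv^T)=\det(M)+v^T\adj(M)\,u$ is in fact slightly cleaner than the paper's, which uses $\det(M)\bigl(1+v^TM^{-1}u\bigr)$ and therefore has to split into three cases according to whether $A_B^T$ (or a shifted version of it) is singular; your version treats all bases uniformly. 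The index bookkeeping in your display is off --- $B$ should be a set of $n$ indices from $\seznam{m}$ selecting rows of $A-D\diag(s)$, and the sum runs over all $j\in\seznam{n}$, not over $j\in B$ --- but that is cosmetic.

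The genuine gap is the proof of your combinatorial claim, which you yourself flag as the obstacle. As written, the flipping argument does not produce what you need: flipping signs keeps you inside $\{\pm1\}^n$, whereas the witness you must exhibit has at most two \emph{nonzero} coordinates, so the walk would have to zero coordinates out rather than flip them; moreover the point $t^\ast$ maximizing $|\varphi|$ has full support, so ``being done after relabeling'' does not apply. Fortunately the claim is true and has a short direct proof in the contrapositive direction (which is also, in essence, what the paper does). Write $\varphi(t)=c_0+\sum_j c_jt_j$ with integers $c_0=\varphi(0)$ and $c_j=\varphi(e_j)-\varphi(0)$, and assume $\varphi(t)\in\{-1,0,1\}$ for all $t\in\{\pm1,0\}^n$ with $\|t\|_0\le2$. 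If $c_0=\pm1$, then $c_0+c_j\in\{-1,0,1\}$ and $c_0-c_j\in\{-1,0,1\}$ force $c_j=0$ for every $j$, so $\varphi$ is constant. If $c_0=0$, the same constraints give $c_j\in\{-1,0,1\}$; and if two coefficients $c_i,c_j$ were nonzero, the point $t=\sgn(c_i)e_i+\sgn(c_j)e_j$, which has $\|t\|_0=2$, would give $\varphi(t)=|c_i|+|c_j|=2$, a contradiction. Hence at most one $c_j$ is nonzero and $\varphi(t)=c_jt_j\in\{-1,0,1\}$ on all of $\{\pm1\}^n$. With this lemma in place, the remainder of your argument goes through.
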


\begin{proof}
\quo{If.} 
Let $B\subseteq\seznam{m}$ of size $n$ and $\tilde{A}\coloneqq A^T_B$. Since $D$ has rank one, we can write $D^T_B=-uv^T$ for some vectors $u,v\in\Z^n$.

First, we suppose $B$ is a basis of~$A$, that is, $\tilde{A}$ is nonsingular. Thus,
\begin{align*}
\det\parentheses[\big]{\tilde{A}+uv^T\diag(s)}
=\det(\tilde{A})\big(1+v^T\diag(s)\tilde{A}^{-1}u\big).
\end{align*}
This value is $\pm1$ or $0$ for every $s\in\{\pm1,0\}^n$ such that $\|s\|_0\leq 2$. Considering the case $s = 0$ and the fact that $\tilde{A}$ is nonsingular, we get that $\det(\tilde{A}) = \pm1$. Thus the function $f(s)\coloneqq v^T\diag(s)\tilde{A}^{-1}u$ satisfies $f(s)\in\{0,-1,-2\}$ particularly for every $s\in\{0,\pm e_1,\dots,\pm e_n\}$. Notice that $f(s)$ is linear, so we have $f(0)=0$ and $f(s)>0$ implies $f(-s)<0$. Therefore, function $f(s)$ is constantly zero. In particular, it is zero for each $s\in\{\pm1\}^n$ and  thus $\det\parentheses[\big]{\tilde{A}+uv^T\diag(s)} = \pm1$ for each $s\in\{\pm1\}^n$. Now, apply Proposition~\ref{propUnimodExp}. 

Second, suppose that $\tilde{A}$ is singular, but $\tilde{A}+uv^T\diag(e_i)$ is nonsingular for some $i\in\seznam{n}$. We proceed in the same way as in the previous case; we just substitute $\tilde{A}\equiv\tilde{A}+uv^T\diag(e_i)$.
%\add{with several modifications. Function $f$ is defined as $f(s)\coloneqq \det\parentheses[\big]{\tilde{A}+uv^T\diag(s)}$ and we use $s$ in the form $s=e_i\pm e_j$, $j=1,\dots,n$.}\remove{; we just use $s$ in the form $s=e_i\pm e_j$, $j=1,\dots,n$.}

Eventually, suppose that $\tilde{A}$ and matrices $\tilde{A}+uv^T\diag(e_i)$, $i=1,\dots,n$, are singular. If all matrices in the form $\tilde{A}+uv^T\diag(s)$, $s\in\{\pm1\}^n$, are singular, then we are done. So suppose there is $s^*\in\{\pm1\}^n$ such that $C\coloneqq\tilde{A}+uv^T\diag(s^*)$ is nonsingular; from the assumption, we know that $\det(C) = \pm1$. Now, the function
\begin{align*}
\det\parentheses[\big]{\tilde{A}+uv^T\diag(s)}
&=\det\parentheses[\big]{C+uv^T\diag(s-s^*)}\\
&=\det(C)\big(1+v^T\diag(s-s^*)C^{-1}u\big)
\end{align*}
is zero at $s=0$ and $s=\pm e_i$, $i=1,\dots,n$. Hence the linear function $v^T\diag(s-s^*)C^{-1}u$ is constantly $-1$ at $s=0$ and $s=\pm e_i$, $i=1,\dots,n$. This means that it is constant for each $s\in\R^n$, which contradicts the case $s=s^*$.

\quo{Only if.} 
This is clear from Proposition~\ref{propUnimodExp} and the discussion below it.
\end{proof}

In the statement of Proposition~\ref{propUnimodRankOne}, considering only those vectors $s\in\{\pm1\}^n$ such that $\|s\|_0\leq 1$ would not be sufficient. As an example, let 
$$
A=\begin{pmatrix}0&0\\1&1\end{pmatrix},\quad
D=\begin{pmatrix}1&1\\0&0\end{pmatrix}.
$$
Then the characterization of Proposition~\ref{propUnimodExp} is not satisfied (take, e.g., $s=(1,-1)^T$), but for each $s\in\{\pm1,0\}^n$ such that $\|s\|_0\leq 1$ the matrix $(A-D\diag(s))^T$ is unimodular.

\section{Quadratic programming reformulation}\label{sQp}
%%%%%%%%%%%%%%%%%%%%%%%%%%%%%%%%%%%%%%%%%%%%%%%%%%%%%%%%%%%%%%% 

A common technique to relax the absolute value $|x|$ is to substitute $x\coloneqq x^1-x^2$, $x^1,x^2\geq0$, and replace $|x|$ with $x^1+x^2$. Herein, $x^1$ approximates the positive part and $x^2$ the negative part of~$x$. In this way, problem \eqref{avlp} is simplified to the linear program
\begin{align}\label{avlpRelaxGen}
\max\ c^Tx^1-c^Tx^2 \st (A-D)x^1-(A+D)x^2\leq b,\ x^1,x^2\geq0,
\end{align}
which provides an upper bound on~$f^*$. The bound can be very weak: If $x^1$ and $x^2$ are feasible solutions of \eqref{avlpRelaxGen}, then $x^1+\alpha e$ and $x^2+\alpha e$ are feasible for every $\alpha\geq0$. Provided each row of $D$ is nonzero, problem \eqref{avlpRelaxGen} is feasible (just take $\alpha$ large enough), even when \eqref{avlp1} is infeasible.

In order to obtain an equivalent reformulation, we include an additional term in the objective function, yielding a quadratic program
\begin{subequations}\label{avlpQp}
\begin{align}
&\max\ c^Tx^1-c^Tx^2-\alpha (x^1)^Tx^2 \\ 
&\,\stl (A-D)x^1-(A+D)x^2\leq b,\ x^1,x^2\geq 0,
\end{align}
\end{subequations}
where $\alpha>0$ is a large constant; by means of Schrijver~\cite{Schr1998} it can be a~priori determined having a polynomial size. The additional term $\alpha(x^1)^Tx^2$ ensures complementarity $(x^1)^Tx^2=0$ so that $x^1-x^2$ is the optimal solution of~\eqref{avlp}. In this case, we say that a solution $(x^1,x^2)$ of \eqref{avlpQp} \emph{yields} an optimum of~\eqref{avlp}.

General nonconvex quadratic programs are hard to solve. In the following theorem, we characterize the class of problem for which any KKT solution automatically produces a feasible solution. In the formulation, we make use of the vector relation $a\gneqq b$ defined as $a\geq b$, $a\not=b$. 

\begin{theorem}\label{thmQpKkt}
In problem \eqref{avlpQp}, for any $b\in\R^n$ each KKT point yields a feasible solution of \eqref{avlp} if and only if 
\begin{align}\label{sysThmKktOptQp}
 |c-A^Tw|\lneqq D^Tw,\ w\geq0
\end{align}
is infeasible.
\end{theorem}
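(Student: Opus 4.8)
The plan is to analyze the KKT system of the quadratic program \eqref{avlpQp} and show that the "bad" KKT points — those which fail to satisfy complementarity $(x^1)^Tx^2=0$ — are governed exactly by the system \eqref{sysThmKktOptQp}. Write the Lagrange multipliers as $w\geq0$ for the inequality constraints $(A-D)x^1-(A+D)x^2\leq b$ and $u^1,u^2\geq0$ for $x^1,x^2\geq0$. The stationarity conditions for the objective $c^Tx^1-c^Tx^2-\alpha(x^1)^Tx^2$ read
\begin{align*}
 c-\alpha x^2-(A-D)^Tw+u^1=0,\qquad
 -c-\alpha x^1+(A+D)^Tw+u^2=0,
\end{align*}
together with complementary slackness $(u^1)^Tx^1=0$, $(u^2)^Tx^2=0$, and primal/dual feasibility. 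The key computation is to evaluate $(x^1)^Tx^2$ at a KKT point: multiplying the first stationarity equation by $x^1$ and the second by $x^2$, using $(u^i)^Tx^i=0$, and adding, one obtains a clean expression of the form $2\alpha (x^1)^Tx^2 = (x^1)^T(\text{something}) + (x^2)^T(\text{something})$ in terms of $c$, $A$, $D$, $w$ and the slack. The crucial point is that, because $\alpha$ is chosen large enough (polynomially bounded, as guaranteed by Schrijver), $(x^1)^Tx^2$ must vanish unless a certain homogeneous/sign-pattern obstruction exists.

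Concretely, I would argue both directions by contraposition. For the "if" direction, suppose \eqref{sysThmKktOptQp} is infeasible; I want to show every KKT point has $(x^1)^Tx^2=0$, hence yields an optimal solution of \eqref{avlp} (here I invoke the discussion after \eqref{avlpQp} that complementarity makes $x^1-x^2$ optimal). If some KKT point had $(x^1)^Tx^2>0$, then there is an index $j$ with $x^1_j>0$ and $x^2_j>0$; for that $j$, complementary slackness forces $u^1_j=u^2_j=0$, so the $j$-th components of the two stationarity equations give
\begin{align*}
 (c-A^Tw)_j = (\alpha x^2 - D^Tw)_j,\qquad
 (c-A^Tw)_j = -(\alpha x^1 - D^Tw)_j = -(\alpha x^1)_j+(D^Tw)_j.
\end{align*}
Comparing these and using $x^1_j,x^2_j>0$ one extracts $|(c-A^Tw)_j|<(D^Tw)_j$ (strictly), while on all other indices a similar, weaker inequality $|(c-A^Tw)_k|\leq(D^Tw)_k$ holds from the sign constraints $u^i\geq0$; together these say $|c-A^Tw|\lneqq D^Tw$ with $w\geq0$, contradicting infeasibility of \eqref{sysThmKktOptQp}. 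For the "only if" direction, assume \eqref{sysThmKktOptQp} has a solution $w$; I would construct, for a suitable right-hand side $b$, a KKT point with $(x^1)^Tx^2>0$ — take $x^1,x^2$ supported on the strict-inequality index (scaled so the stationarity equations hold with $u^1=u^2=0$ there and $u^1,u^2\geq0$ elsewhere), pick $b$ to make the active constraint tight, and verify all KKT conditions. Such a point does not yield a feasible (let alone optimal) solution of \eqref{avlp} in general because $x^1-x^2$ with $|x^1-x^2|\neq x^1+x^2$ need not satisfy $Ax-D|x|\leq b$.

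The main obstacle I anticipate is handling the distinction between the strict inequality at the "bad" coordinate $j$ (which is what produces the strict part of $\lneqq$ and uses $\alpha>0$ large) and the non-strict inequalities elsewhere, and making sure the bookkeeping with the slack variable in the main constraint is consistent — in particular that the term coming from the active row contributes with the right sign so that the homogeneous obstruction is precisely $|c-A^Tw|\lneqq D^Tw$, $w\geq0$, and not some perturbed variant. A secondary technical point is the "only if" construction: one must check that the candidate $(x^1,x^2)$ can be scaled to simultaneously satisfy stationarity with the nonnegative multipliers and lie in the feasible region for the chosen $b$; the freedom in $b$ and the positive scaling parameter should make this routine, but it needs to be spelled out carefully.
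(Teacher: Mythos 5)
Your overall strategy coincides with the paper's: the same KKT system, the same extraction of $|(c-A^Tw)_j|<(D^Tw)_j$ at an index $j$ with $x^1_j>0$, $x^2_j>0$, and the same complementary-slackness case analysis giving $|(c-A^Tw)_k|\leq (D^Tw)_k$ at the remaining indices. One small imprecision in the ``if'' direction: the non-strict inequalities at the other indices do not follow from the sign constraints $u^1,u^2\geq0$ alone (the stationarity equations there still contain the unknown terms $\alpha x^1_k,\alpha x^2_k$); you need the complementarity $(u^1)^Tx^1=(u^2)^Tx^2=0$ and a two-case analysis ($x^1_k=0$ versus $u^1_k=0$, crossing between the two stationarity equations). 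With that, the ``if'' direction is sound and matches the paper.

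The genuine gap is in the ``only if'' construction. Taking $x^1,x^2$ supported only on the strict-inequality index $i$ does not work: at any other index $k$, stationarity with $x^1_k=x^2_k=0$ and nonnegative multipliers forces $((A-D)^Tw)_k\geq c_k\geq((A+D)^Tw)_k$, hence $(D^Tw)_k=0$ and $c_k=(A^Tw)_k$, which a solution $w$ of \eqref{sysThmKktOptQp} need not satisfy. The repair is to put mass wherever $w$ dictates it: set $x^1\coloneqq\frac{1}{\alpha}\big({-c}+(A+D)^Tw\big)^+$ and $x^2\coloneqq\frac{1}{\alpha}\big(c-(A-D)^Tw\big)^+$, so that the multipliers $u^1=-c+\alpha x^2+(A-D)^Tw$ and $u^2=c+\alpha x^1-(A+D)^Tw$ are automatically nonnegative and complementary, and then choose $b\coloneqq(A-D)x^1-(A+D)x^2$ so that every main constraint is tight; this is exactly the paper's construction, and the strict inequality at index $i$ then gives $x^1_i>0$ and $x^2_i>0$. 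A second, smaller gap: you must prove, not merely assert ``in general'', that this KKT point fails to yield a feasible point. This does hold here: $(D^Tw)_i>|c_i-(A^Tw)_i|\geq0$ forces the $i$-th column of $D$ to be nonzero, whence $D|x^1-x^2|\lneqq D(x^1+x^2)$ and therefore $A(x^1-x^2)-D|x^1-x^2|\gneqq(A-D)x^1-(A+D)x^2=b$, so $x^1-x^2\notin\mna{M}(b)$.
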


\begin{proof}
First notice that the KKT conditions of the quadratic program  \eqref{avlpQp} read
\begin{subequations}\label{kktThmKktOptQp}
\begin{align}
\label{kktThmKktOptQp1}
-c+\alpha x^2+(A-D)^Tw=u&\geq0,\\
\label{kktThmKktOptQp2}
 c+\alpha x^1-(A+D)^Tw=v&\geq0,\\
 w&\geq0,\\
u^Tx^1=v^Tx^2=w^T\big(b-(A-D)x^1+(A+D)x^2\big)&=0.\label{kktThmKktOptQp4}
\end{align}
\end{subequations}
where \eqref{kktThmKktOptQp4} is the complementary slackness.

\quo{Only if.}
Let $w$ be a solution of system \eqref{sysThmKktOptQp} and define
\begin{align*}
x^1 &\coloneqq \frac{1}{\alpha} \big(-c+(A+D)^Tw\big)^+ \geq0,\quad 
v \coloneqq c+\alpha x^1-(A+D)^Tw \geq0,\\
x^2 &\coloneqq \frac{1}{\alpha} \big(c-(A-D)^Tw\big)^+ \geq0,\quad 
u \coloneqq -c+\alpha x^2+(A-D)^Tw \geq0,\\
b &\coloneqq (A-D)x^1-(A+D)x^2.
\end{align*}
Then the KKT conditions \eqref{kktThmKktOptQp} are satisfied, including the complementarity conditions. In addition, due to the definition of $b$, the pair $(x^1,x^2)$ is feasible to~\eqref{avlpQp}. From the assumption, there is $i$ such that 
$|c_i-(A^Tw)_i| < (D^Tw)_i$. In other words,
\begin{align*}
0<-c_i+(A+D)^T_{i*}w,\quad
0<c_i-(A-D)^T_{i*}w.
\end{align*}
By the definition of $x^1$ and $x^2$ we have $x^1_i>0$ and $x^2_i>0$. Therefore the complementarity $(x^1)^Tx^2=0$ is not satisfied. This also means that the point $x^*\coloneqq x^1-x^2$ does not belong to~$\mna{M}$. To see it, recall that
$$
(A-D)x^1-(A+D)x^2=A(x^1-x^2)-D(x^1+x^2)=b.
$$
In view of $|c_i-(A^Tw)_i| < (D^T)_{i*}w$ we have $D_{*i}\not=0$, which implies $D(x^1+x^2)\gneqq D|x^*|$. Thus $Ax^*-D|x^*|\not\leq b$.

\quo{If.}
Let $x^1,x^2,u,v,w$ satisfy the KKT conditions \eqref{kktThmKktOptQp}. From $u^Tx^1=0$ we have $x^1_i=0$ or $u_i=0$ for each~$i$. The former implies $c_i-((A+D)^Tw)_i\geq0$ and the latter implies $c_i-((A-D)^Tw)_i\geq0$. In any case, we deduce $(A^Tw)_i-c_i\leq (D^Tw)_i$. From $v^Tx^2=0$ we analogously obtain $c_i-(A^Tw)_i\leq (D^Tw)_i$. Thus, in total, we have
\begin{align}\label{sysPfThmKktOptQp}
|c-A^Tw|\leq D^Tw.
\end{align}
Suppose to the contrary that the KKT point $(x^1,x^2)$ does not produce a feasible solution. Thus we have $(x^1)^Tx^2>0$, that is, there is $i$ such that $x^1_i>0$ and $x^2_i>0$. Then $u_i=v_i=0$ and 
\begin{align*}
c_i-((A-D)^Tw)_i=\alpha x^2_i>0,\quad
-c_i+((A+D)^Tw)_i=\alpha x^1_i>0.
\end{align*}
Hence $|c_i-(A^Tw)_i| < (D^Tw)_i$ and \eqref{sysThmKktOptQp} is feasible.
\end{proof}

Notice that solvability of \eqref{sysThmKktOptQp} can be checked in polynomial time by means of linear programming. The system can be stated as
\begin{align*}
-(A+D)^Tw&\leq-c,\ \ 
(A-D)^Tw\leq c,\ \ 
w\geq0,\\
-e^T(A+D)^Tw&\leq-e^Tc-\eps,\ \ 
e^T(A-D)^Tw\leq e^Tc-\eps,
\end{align*}
where $\eps>0$ is small enough with polynomial size (cf.\ \cite{Schr1998}).

Due to NP-hardness of the absolute value LP problem and strong conditions of Theorem~\ref{thmQpKkt}, the system \eqref{sysThmKktOptQp} is often feasible. However, the class of infeasible instances is nontrivial. It comprises not only the case $D=0$, but also the instances for which the optimum $x^*$ satisfies $Ax^*\leq b$, that is, $x^*$ is an optimum of the LP problem $\max \{c^Tx\mmid Ax\leq b\}$.

\begin{proposition}
Suppose that \eqref{avlp} has an optimum. If \eqref{sysThmKktOptQp} is infeasible and $Ax\leq b$ is feasible, then there is an optimum $x^*$ of \eqref{avlp} such that $Ax^*\leq b$. 
\end{proposition}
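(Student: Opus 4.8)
The plan is to compare the optimal value $f^*$ of \eqref{avlp} with the optimal value of the plain linear program $\max\{c^Tx\mmid Ax\leq b\}$ and show that the two coincide; the witness will be an LP optimum $\bar{x}$, which automatically satisfies $A\bar{x}\leq b$ and lies in $\mna{M}$. First I would record the easy inclusion: since $D\geq0$ and $|x|\geq0$, every $x$ with $Ax\leq b$ also satisfies $Ax-D|x|\leq b$, so $\{x\mmid Ax\leq b\}\subseteq\mna{M}$. In particular the value $g^*$ of the plain LP satisfies $g^*\leq f^*$, hence is finite; moreover the plain LP cannot be unbounded, since an unbounded ray would lie in $\mna{M}$ and contradict the existence of an optimum of \eqref{avlp}. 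Being feasible (by hypothesis) and bounded, the plain LP attains its optimum at some $\bar{x}$, with $c^T\bar{x}=g^*$.

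Next I would invoke LP strong duality for $\max\{c^Tx\mmid Ax\leq b\}$: there is a dual optimal $w\geq0$ with $A^Tw=c$ and $b^Tw=g^*$. The key observation is that this $w$ is ``almost'' a solution of \eqref{sysThmKktOptQp}: from $A^Tw=c$ we get $|c-A^Tw|=0\leq D^Tw$ and $w\geq0$, so $w$ satisfies the nonstrict version of \eqref{sysThmKktOptQp}. Since the strict-in-one-coordinate system $|c-A^Tw|\lneqq D^Tw$, $w\geq0$, is assumed infeasible, the inequality $|c-A^Tw|\leq D^Tw$ must hold with equality, which forces $D^Tw=0$.

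Finally I would take any optimum $x^*$ of \eqref{avlp}, which exists by hypothesis, and multiply its feasibility constraint $Ax^*-D|x^*|\leq b$ by $w\geq0$ to obtain $(A^Tw)^Tx^*-(D^Tw)^T|x^*|\leq b^Tw$. Substituting $A^Tw=c$, $D^Tw=0$, and $b^Tw=g^*$ yields $c^Tx^*\leq g^*$, i.e.\ $f^*\leq g^*$. Combined with $g^*\leq f^*$ from the first paragraph, $g^*=f^*$, so $\bar{x}$ is an optimum of \eqref{avlp} with $A\bar{x}\leq b$, as claimed.

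I expect the only real content to be the middle step — recognising that the dual multiplier $w$ of the plain LP, viewed as a candidate for \eqref{sysThmKktOptQp}, is obstructed solely by the inequation clause $|c-A^Tw|\neq D^Tw$, and hence must in fact satisfy $D^Tw=0$. Everything else is routine: the inclusion $\{Ax\leq b\}\subseteq\mna{M}$, the boundedness argument guaranteeing that $\bar{x}$ exists, and the weak-duality-style computation $w^T(Ax^*-D|x^*|)\leq b^Tw$. No appeal to Theorem~\ref{thmQpKkt} or the KKT system itself is needed; only the hypothesis that \eqref{sysThmKktOptQp} is infeasible enters, through the equality $D^Tw=0$.
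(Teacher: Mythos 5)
Your argument is correct and is essentially the paper's proof in a different wrapper: both hinge on producing a dual multiplier $w\geq0$ with $A^Tw=c$ for the plain LP $\max\{c^Tx\mmid Ax\leq b\}$, observing that infeasibility of \eqref{sysThmKktOptQp} forces $D^Tw=0$, and then premultiplying $Ax^*-D|x^*|\leq b$ by $w$ to bound $f^*$. The paper reaches $w$ by contradiction via the Farkas lemma (handling the $z^*=0$ case where you instead rule out unboundedness), whereas you argue directly via strong duality and obtain the slightly sharper conclusion $f^*=g^*$; the mathematical content is the same.
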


\begin{proof}
Suppose to the contrary that no optimum $x^*$ of \eqref{avlp} satisfies $Ax^*\leq b$. That is, the system
$$
Ax\leq b,\ \ c^Tx\geq c^Tx^*
$$
is infeasible. By the Farkas lemma, the dual system
\begin{align*}
A^Tw=cz,\ \ (w,z)\geq0,\ \ b^Tw<(c^Tx^*)z
\end{align*}
has a solution $(w^*,z^*)$. If $z^*=0$, then again by the Farkas lemma applied to the resulting system $A^Tw=0$, $w\geq0$, $b^Tw<0$ we obtain that the system $Ax\leq b$ is infeasible; a contradiction. Thus $z^*>0$ and we can assume without loss of generality that $z^*=1$. Hence we have
\begin{align*}
A^Tw^*=c,\ \ w^*\geq0,\ \ b^Tw^*<c^Tx^*.
\end{align*}
Premultiplying inequality $Ax^*-D|x^*|\leq b$ by $w^*$, we get
$$
c^Tx^*-(w^*)^TD|x^*|\leq b^Tw^*.
$$
If $D^Tw^*=0$, then $c^Tx^*\leq b^Tw^*$; a contradiction. Therefore, in view of $D \geq 0$, we have
\begin{align*}
|A^Tw^*-c|=0\lneqq D^Tw^*,
\end{align*}
meaning that \eqref{sysThmKktOptQp} is feasible; a contradiction.
\end{proof}
\begin{comment}
\begin{proposition}
Let $x^*$ be an optimum of \eqref{avlp}. If \eqref{sysThmKktOptQp} is infeasible and $Ax\leq b$ is feasible, then $Ax^*\leq b$. 
\end{proposition}

\begin{proof}
Suppose to the contrary that $Ax^*\leq b$ does not hold. That is, the system
$$
Ax\leq b,\ \ c^Tx\geq c^Tx^*
$$
is infeasible. By the Farkas lemma, the dual system
\begin{align*}
A^Tw=cz,\ \ (w,z)\geq0,\ \ b^Tw<(c^Tx^*)z
\end{align*}
has a solution $(w^*,z^*)$. If $z^*=0$, then again by the Farkas lemma the system $Ax\leq b$ is infeasible; a contradiction. Thus $z^*>0$ and we can assume without loss of generality that $z^*=1$. Hence we have
\begin{align*}
A^Tw^*=c,\ \ w^*\geq0,\ \ b^Tw^*<c^Tx^*.
\end{align*}
Premultiplying inequality $Ax^*-D|x^*|\leq b$ by $w^*$, we get
$$
c^Tx^*-(w^*)^TD|x^*|\leq b^Tw^*.
$$
If $D^Tw^*=0$, then $c^Tx^*\leq b^Tw^*$; a contradiction. Therefore
\begin{align*}
0=|A^Tw^*-c|\lneqq D^Tw^*,
\end{align*}
meaning that \eqref{sysThmKktOptQp} is feasible; a contradiction.
\end{proof}
\end{comment}

%%%%%%%%%%%%%%%%%%%%%%%%%%%%%%%%%%%%%%%%%%%%%%%%%%%%%%%%%%%%%%% 
\section{Special situation of basis stability}\label{sBstab}
%%%%%%%%%%%%%%%%%%%%%%%%%%%%%%%%%%%%%%%%%%%%%%%%%%%%%%%%%%%%%%% 

We already observed a connection between absolute value LP and interval LP. Utilizing this relation, we can identify a class of problems, which are efficiently solvable -- the so-called basis stable problems. 

In interval LP, basis stability refers to a situation, in which there is a common optimal basis of \eqref{ilp} for each $\tilde{A}\in[A\pm D]$. In this case, the absolute value LP problem is easily resolved. However, there are two drawbacks -- first, the situation is rare, and, second, it is co-NP-hard to check for basis stability~\cite{Hla2014a}. The good news is there there are sufficient conditions that work well \cite{Kon2001,Kra1975}; we adapt them to our problem.

%%%%%
\myparagraph{How to check for basis stability}
Let $B$ be a basis. By $A_B$ we mean a restriction of $A$ to the rows indexed by $B$, and similarly for $A_N$, where $N\coloneqq\seznam{m}\setminus B$ are nonbasic indices. 
Basis $B$ is optimal for the LP problem \eqref{ilp} with a certain $\tilde{A}\in[A\pm D]$ if and only if $\tilde{A}_B$ is nonsingular and the following two conditions hold,
\begin{align}
\label{bStab1}
(\tilde{A}^{-1}_B)^T c &\geq 0,\\
\label{bStab2}
\tilde{A}_N\tilde{A}^{-1}_B b_B &\leq b_N.
\end{align}
To verify basis stability w.r.t.\ basis $B$, we have to check validity of these conditions for every $\tilde{A}\in[A\pm D]$.

A condition for \eqref{bStab1} works as follows. Consider the interval system of linear equations
\begin{align*}
[A\pm D]^T_B y = c.
\end{align*}
There exist many methods to solve it; see \cite{May2017,MooKea2009,Neu1990,Roh2006a}. A solution to such a system is an interval vector $[\uvr{y},\ovr{y}]$ such that $(\tilde{A}^{-1}_B)^T c\in[\uvr{y},\ovr{y}]$ for every $\tilde{A}\in[A\pm D]$. Thus we solve the interval system, and then we just check for $\uvr{y}\geq0$, which shows stability of~\eqref{bStab1}.

For condition \eqref{bStab2}, we proceed similarly. First, solve the interval system of linear equations 
%\begin{align*}
$[A\pm D]_B x = b_B,$ 
%\end{align*}
and let $[\uvr{x},\ovr{x}]$ be the resulting interval solution. Now, evaluate 
$[\uvr{z},\ovr{z}]\coloneqq \tilde{A}_N[\uvr{x},\ovr{x}]$ by interval arithmetic. Eventually, if $\ovr{z}\leq b_N$, then stability of condition \eqref{bStab2} is verified.

%%%%%
\myparagraph{How to find the optimal value and optimal solution}
Once stability of an optimal basis $B$ is verified, the optimal value $f^*$ can be expressed
as
\begin{align*}
f^* = \max\ c^T\tilde{A}^{-1}_B b_B \st \tilde{A}\in[A\pm D].
\end{align*}
To solve this optimization problem efficiently by means of linear programming, we substitute $y\coloneqq (\tilde{A}^{-1}_B)^T c\geq0$ and write
\begin{align*}
f^* 
 = \max\ b_B^Ty \st \tilde{A}^T_B y = c,\ y\geq0,\ \tilde{A}\in[A\pm D].
% &= \max\ b_B^Ty \st (\tilde{A}-D)^T_B y \leq c \leq  (\tilde{A}+D)^T_B y,\ y\geq0;
\end{align*}
By the properties of the united solution set of interval systems of linear equations~\cite{May2017,Neu1990,Roh2006a}, we can express the feasible set of the above optimization problem as
$$
(\tilde{A}-D)^T_B y \leq c \leq  (\tilde{A}+D)^T_B y,\ y\geq0.
$$
In this way, we obtain an LP formulation for~$f^*$.

\begin{corollary}
Under basis stability with basis $B$, 
\begin{align}
f^* = \max\ b_B^Ty \st (\tilde{A}-D)^T_B y \leq c \leq  (\tilde{A}+D)^T_B y,\ y\geq0.
\label{lpBstabOpt}
\end{align}
\end{corollary}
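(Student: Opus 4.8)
The plan is to combine three facts already available in this section: the optimal-value identity \eqref{ilpOf}, the meaning of basis stability, and the standard description of the united solution set of an interval system of linear equations restricted to a single orthant. First I would note that basis stability with basis $B$ means that for every $\tilde A \in [A\pm D]$ the matrix $\tilde A_B$ is nonsingular and $B$ is an optimal basis of \eqref{ilp}, so that $f(\tilde A) = c^T \tilde A_B^{-1} b_B$. Together with \eqref{ilpOf} this gives $f^* = \max_{\tilde A \in [A\pm D]} c^T \tilde A_B^{-1} b_B$, the maximum being attained because $[A\pm D]$ is compact and $\tilde A \mapsto \tilde A_B^{-1}$ is continuous on it. Substituting $y \coloneqq (\tilde A_B^{-1})^T c$ turns the objective into $b_B^T y = c^T \tilde A_B^{-1} b_B$; by \eqref{bStab1} we have $y \geq 0$, and $y$ solves the transposed system $\tilde A_B^T y = c$.

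The core step is to identify the range of $y$ as $\tilde A$ runs over $[A\pm D]$. Since the interval entries of $[A\pm D]_B$ vary independently and $y \geq 0$ (so $|y_i| = y_i$), for a fixed nonnegative $y$ the quantity $(\tilde A_B^T y)_j$ can take exactly the values in $[ (A_B^T y)_j - (D_B^T y)_j,\ (A_B^T y)_j + (D_B^T y)_j ]$, and — because distinct columns of $\tilde A_B$ involve disjoint entries — all $n$ equations $\tilde A_B^T y = c$ can be met simultaneously precisely when each $c_j$ lies in the corresponding interval. This amounts to $(A-D)_B^T y \leq c \leq (A+D)_B^T y$, with the nonsingularity of the witnessing $\tilde A_B$ again supplied by basis stability. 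Hence the set of admissible $y$ equals the feasible set of \eqref{lpBstabOpt}.

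It then remains to close the equality of optimal values: every feasible $y$ of \eqref{lpBstabOpt} arises from some $\tilde A \in [A\pm D]$ with $\tilde A_B^T y = c$ and $\tilde A_B$ nonsingular, hence $b_B^T y = c^T \tilde A_B^{-1} b_B = f(\tilde A) \leq f^*$; conversely the $\tilde A$ attaining $f^*$ produces a feasible $y$ with $b_B^T y = f^*$. I expect the main obstacle to be the middle paragraph: one has to be careful that it is the fixed sign pattern $y \geq 0$ — guaranteed here by \eqref{bStab1} — that collapses the generally nonconvex united solution set of $[A\pm D]^T_B y = c$ to the single polyhedron in \eqref{lpBstabOpt}. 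The underlying fact about united solution sets is the Oettli--Prager characterization and can be quoted from \cite{May2017,Neu1990,Roh2006a}; everything else is routine.
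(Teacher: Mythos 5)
Your proposal is correct and follows essentially the same route as the paper: basis stability reduces $f^*$ to $\max_{\tilde{A}\in[A\pm D]} c^T\tilde{A}_B^{-1}b_B$, the substitution $y\coloneqq(\tilde{A}_B^{-1})^Tc\geq0$ turns this into an optimization over the united solution set of $[A\pm D]_B^Ty=c$, and the Oettli--Prager description of that set on the orthant $y\geq0$ yields the stated LP. You merely make explicit a few points the paper leaves implicit (attainment by compactness, the column-wise independence argument, and nonsingularity of the witnessing $\tilde{A}_B$), including the correct reading of the typo $(\tilde{A}\pm D)_B^T$ as $(A\pm D)_B^T$.
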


After computing $f^*$, we determine the optimal solution $x^*$, too. Let $y^*$ be an optimum to~\eqref{lpBstabOpt}. Determine $\tilde{A}_B\in[A\pm D]_B$ such that $\tilde{A}_B^Ty^*=c$, which is an easy task~\cite{Roh2006a}. Finally, we have $x^*=\tilde{A}_B^{-1}b_B$.

%%%%%%%%%%%%%%%%%%%%%%%%%%%%%%%%%%%%%%%%%%%%%%%%%%%%%%%%%%%%%%% 
\section{Conclusion}
%%%%%%%%%%%%%%%%%%%%%%%%%%%%%%%%%%%%%%%%%%%%%%%%%%%%%%%%%%%%%%% 

In this paper, we thoroughly investigated geometric and computational-complexity properties of the absolute value LP problems. In particular, we presented  various conditions for convexity, connectedness, boundedness and feasibility of the feasible set. We also investigated the formulation power of absolute value inequalities in characterizing nonconvex polyhedral sets. In linear programming, integrality of vertices relates to unimodular matrices, and in case of absolute value LP problems the unimodularity property extends to matrices of certain form. Absolute value LP problems can be reformulated by means of integer programming or quadratic programming; for the latter, we proposed a necessary and sufficient condition when a KKT point automatically produces feasible solutions of the original problem.
%We succeeded in a complete characterization of several of the properties and partially succeeded for the others. 

Below, we sum up some of the problems that remain open; they mostly regard the feasible set~$\mna{M}$:
\begin{itemize}
\item
\emph{Necessary and sufficient condition for connectedness of~$\mna{M}$.}

So far, only a simple sufficient condition is known. Some more results would be very desirable because handling disconnectedness in optimization is a hard task.
\item
\emph{Necessary and sufficient condition for convexity of~$\mna{M}$.}

We proposed two necessary conditions, but a complete characterization of convexity is unknown.
\item
\emph{The computational complexity} (polynomial vs.\ NP-hard) of checking integrality of the vertices of~$\mna{M}$ for every~$b\in\Z^m$. The characterization proposed in this paper is exponential in~$n$, which however does not exclude the possibility of a polynomial characterization.
\end{itemize}

\subsubsection*{Acknowledgments.} 
The authors were supported by the Czech Science Foundation Grant P403-22-11117S.

%%%%%%%%%%%%%%%%%%%%%%%%%%%%%%%%%%%%%%%%%%%%%%%%%%%%%%%%%%%%%%% 
% REFERENCES
%%%%%%%%%%%%%%%%%%%%%%%%%%%%%%%%%%%%%%%%%%%%%%%%%%%%%%%%%%%%%%% 

\bibliographystyle{abbrv}
\bibliography{abs_val_prog}

\end{document}